\documentclass[a4paper,11pt,reqno]{amsart}

\usepackage[T1]{fontenc}
\usepackage{microtype}
\usepackage{silence}
\usepackage{amsmath,amssymb,mathtools}
\usepackage{mathrsfs}
\usepackage{bm}
\usepackage{esint}

\usepackage{graphicx}
\usepackage{booktabs}

\usepackage{enumitem}

\usepackage[dvipsnames]{xcolor}
\definecolor{refblue}{RGB}{0,0,255}

\usepackage[
  colorlinks=true,
  linkcolor=refblue,
  citecolor=refblue,
  urlcolor=refblue,
  pdfborder={0 0 0}
]{hyperref}

\usepackage[nameinlink,noabbrev]{cleveref}

\numberwithin{equation}{section}

\theoremstyle{plain}
\newtheorem{theorem}{Theorem}[section]
\newtheorem{lemma}[theorem]{Lemma}
\newtheorem{proposition}[theorem]{Proposition}
\newtheorem{corollary}[theorem]{Corollary}

\newtheorem{remark}[theorem]{Remark}

\theoremstyle{definition}

\crefname{theorem}{Theorem}{Theorems}
\crefname{lemma}{Lemma}{Lemmas}
\crefname{proposition}{Proposition}{Propositions}
\crefname{corollary}{Corollary}{Corollaries}
\crefname{claim}{Claim}{Claims}
\crefname{definition}{Definition}{Definitions}
\crefname{assumption}{Assumption}{Assumptions}
\crefname{example}{Example}{Examples}
\crefname{remark}{Remark}{Remarks}
\crefname{equation}{equation}{equations}
\crefname{section}{Section}{Sections}

\newcommand{\R}{\mathbb{R}}

\newcommand{\cE}{\mathcal{E}}

\DeclareMathOperator{\supp}{supp}

\title[Fract--Log Faber--Krahn Inequalities]
{On Faber-Krahn inequality for Dirichlet Fractional-Logarithmic Laplacian }

\author{Huyuan Chen}
\address{Center for Mathematics and Interdisciplinary Sciences,
Fudan University, Shanghai 200433, China;
Shanghai Institute for Mathematics and Interdisciplinary Sciences,
Shanghai 200433, China}
\email{chenhuyuan@yeah.net; chenhuyuan@simis.cn}

\author{Rui Chen}
\address{School of Mathematical Sciences, Fudan University,
Shanghai 200433, China;
Brandenburg University of Technology Cottbus--Senftenberg,
03046 Cottbus, Germany}
\email{chenrui23@m.fudan.edu.cn}

\author{Bobo Hua}
\address{School of Mathematical Sciences, Fudan University,
Shanghai 200433, PR China;
Shanghai Center for Mathematical Sciences, Fudan University,
Shanghai 200433, PR China}
\email{bobohua@fudan.edu.cn}

\subjclass[2020]{35P15,47G20}
\keywords{fractional-logarithmic Laplacian, Faber--Krahn inequality,
first Dirichlet eigenvalue, large-volume asymptotics, spectral inequalities}

\begin{document}

\begin{abstract}
We study the Faber--Krahn problem for the fractional--logarithmic Laplacian
$(-\Delta)^{s+\ln}$, $0<s<1$, with Fourier symbol
$|\xi|^{2s}\ln|\xi|^2$. We prove a small-volume and large-volume dichotomy.
For sufficiently small volume, balls minimize the first Dirichlet eigenvalue,
with rigidity up to translation. As a consequence, the Faber--Krahn
inequality for the fractional Laplacian is recovered from the small-scale
limit.

For large volumes, after shifting the spectral minimum $-1/(es)$, we prove
\[
\lambda_1^{s+\ln}(\Omega)+\frac1{es}
\gtrsim
|\Omega|^{-\frac{4}{n+1}}.
\]
This rate is attained by anisotropic boxes, while for balls
\[
\lambda_1^{s+\ln}(B)+\frac1{es}
\asymp
|B|^{-\frac2n}.
\]
Hence, for $n\geq2$, sufficiently large anisotropic boxes have strictly
smaller first eigenvalue than balls of the same volume, and the
Faber--Krahn inequality fails in the large-volume regime. The exponent
$4/(n+1)$ reflects the codimension-one minimizing sphere and the second
and fourth order growth of the symbol in the normal and tangential
directions, respectively. We also prove the rigidity of the
Faber--Krahn inequality for the logarithmic Laplacian, resolving the open problem posed in
\cite{ChenWeth2019}.
\end{abstract}

\maketitle

\section{Introduction and Main Results}

In this paper, we study the Faber--Krahn inequality for the
fractional-logarithmic Laplacian
\[
(-\Delta)^{s+\ln}
:=
\left.
\frac{d}{dt}(-\Delta)^t
\right|_{t=s},
\qquad
0<s<1,
\]
where $(-\Delta)^s$ denotes the fractional Laplacian
\[ (-\Delta)^s u(x)
=
c_{n,s}\,\mathrm{p.v.}\int_{\mathbb R^n}\frac{u(x)-u(y)}{|x-y|^{n+2s}}dy,\qquad  c_{n,s}
:=
2^{2s}\pi^{-n/2}s\,
\frac{\Gamma\!\left(\frac{n+2s}{2}\right)}{\Gamma(1-s)}.\]
Its Fourier symbol is $|\xi|^{2s}\ln|\xi|^2$
and, for every $u\in C_c^2(\R^n)$, it admits the integral
representation
\[
(-\Delta)^{s+\ln}u(x)
=
\operatorname{p.v.}
\int_{\R^n}
\bigl(u(x)-u(y)\bigr)
K_{s+\ln}(x-y)\,dy,
\]
where
\[
K_{s+\ln}(z)
=
c_{n,s}
\bigl(b_{n,s}-2\ln|z|\bigr)
|z|^{-n-2s},\qquad b_{n,s}
=
\frac{d}{ds}\ln c_{n,s}.
\]

The operator was introduced in \cite{ChenChenHauer2026} as the derivative of
the fractional Laplacian with respect to its order. Subsequent works have
developed its potential theory and global regularity
\cite{Che26}, introduced the conformal fractional-logarithmic Laplacian on
the sphere together with related sharp inequalities \cite{CCH26}, and
studied small-order limits for fractional harmonic functions
\cite{JarohsSenWeth2026}. Motivated by these developments, in the present
paper we consider the first Dirichlet eigenvalue
$\lambda_1^{s+\ln}(\Omega)$ on bounded open sets
$\Omega\subset\R^n$ and study the corresponding Faber--Krahn problem:
for a prescribed volume, does the ball minimize
$\lambda_1^{s+\ln}(\Omega)$?

\smallskip

The Faber--Krahn inequality is classical for both the Laplacian and the
fractional Laplacian. For a bounded open set $\Omega\subset\R^n$, let $\lambda_1(\Omega)$
denote the first Dirichlet eigenvalue of $-\Delta$. Faber
\cite{Faber1923} and Krahn \cite{Krahn1925} proved that, for every ball
$B$ satisfying $|B|=|\Omega|$,
\[
\lambda_1(\Omega)\geq\lambda_1(B),
\]
with equality precisely for balls, up to translation. Equivalently,
\[
|\Omega|^{2/n}\lambda_1(\Omega)
\geq
|B|^{2/n}\lambda_1(B).
\]
Its sharp quantitative form was established by Brasco, De Philippis, and
Velichkov \cite{BrascoDePhilippisVelichkov2015}.

For $0<s<1$, let $\lambda_{1,s}(\Omega)$
be the first Dirichlet eigenvalue of $(-\Delta)^s$ with zero exterior
condition. The fractional Faber--Krahn inequality states that $\lambda_{1,s}(\Omega)\geq\lambda_{1,s}(B),$
or equivalently,
\[
|\Omega|^{2s/n}\lambda_{1,s}(\Omega)
\geq
|B|^{2s/n}\lambda_{1,s}(B),
\]
with equality only for balls, up to translation; see, for instance,
\cite[Theorem~3.5]{BrascoLindgrenParini2014}. Quantitative stability
estimates were obtained by Brasco, Cinti, and Vita
\cite{BrascoCintiVita2020}.

\smallskip

We find a different behavior for the fractional-logarithmic Laplacian:
the validity of the Faber--Krahn inequality depends on the volume. For
sufficiently small domains, the inequality holds and the ball is the unique
minimizer up to translation. In contrast, when $n\geq2$, the inequality
fails for sufficiently large volume, since suitably anisotropic boxes have
strictly smaller first eigenvalue than balls of the same volume. Thus the
fractional-logarithmic Laplacian exhibits a small-volume/large-volume
dichotomy which does not occur for the classical or fractional Laplacian.
We first state the small-volume result.

\begin{theorem}
\label{teo dom-1}
Let $n\geq1$ and $s\in(0,1)$. There exists
$V_0=V_0(n,s)>0$ such that, for every bounded open set
$\Omega\subset\R^n$ with $0<|\Omega|\leq V_0$ and every ball
$B\subset\R^n$ satisfying $|B|=|\Omega|$,
\[
\lambda_1^{s+\ln}(\Omega)
\geq
\lambda_1^{s+\ln}(B).
\]
If equality holds, then $\Omega$ coincides with a translate of $B$ up to a
null set. If, in addition, $\Omega$ has continuous boundary, then equality
holds if and only if $\Omega=B+x_0$
for some $x_0\in\R^n$.
\end{theorem}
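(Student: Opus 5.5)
The plan is to split the kernel $K_{s+\ln}$ into a part where symmetric decreasing rearrangement helps and a part where it can hurt, then show that for small volume the harmful part is absorbed by the strict gain from the good part. Throughout, $u\ge0$ is a first eigenfunction on $\Omega$ (we may take $u=|u|$), normalized by $\|u\|_2=1$, and $u^*$ is its Schwarz symmetrization, supported in $B=\Omega^*$. The quadratic form is
\[
\cE(u)=\tfrac12\iint(u(x)-u(y))^2K_{s+\ln}(x-y)\,dx\,dy .
\]
It suffices to prove $\cE(u)\ge\cE(u^*)$, with equality only if $u$ is a translate of $u^*$.

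\textbf{Step 1 (kernel splitting).} The radial profile $k(r)=c_{n,s}(b_{n,s}-2\ln r)r^{-n-2s}$ is strictly decreasing on $(0,r_1]$ and increasing on $[r_1,\infty)$, where $k(r_1)<0$ is its global minimum, and $k(r)\to0^-$ as $r\to\infty$. Set
\[
K_1=(K_{s+\ln}-k(r_1))\mathbf 1_{B_{r_1}},\qquad h=(K_{s+\ln}-k(r_1))\mathbf 1_{B_{r_1}^c}.
\]
Then $K_{s+\ln}=K_1+k(r_1)+h$. Here $K_1\ge0$ is radially strictly decreasing, and $0\le h\le|k(r_1)|$ is radially nondecreasing and vanishes on $B_{r_1}$. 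Expanding the form gives
\[
\cE(u)=\cE_{K_1}(u)+k(r_1)\bigl(\|u\|_2^2-\|u\|_1^2\bigr)+\|h\|_{L^1}\ \text{-type terms}-\iint u(x)u(y)h(x-y) .
\]
More precisely, writing $h=H-(H-h)$ with $H$ a large constant avoids non-integrability issues; only the cross term $\iint uu\,h$ matters. The constant part depends only on $\|u\|_1,\|u\|_2$, which are rearrangement invariant. By Riesz's inequality (in the Almgren--Lieb form for $\cE_{K_1}$), $\cE_{K_1}(u)\ge\cE_{K_1}(u^*)$, with Lieb's strictness for strictly decreasing kernels. Since $h$ is nondecreasing, the cross term moves the wrong way. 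However, for $|\Omega|$ small, $\operatorname{diam}B<r_1$, so $\iint u^*u^*h=0$. Hence
\[
\cE(u)-\cE(u^*)\ \ge\ \bigl[\cE_{K_1}(u)-\cE_{K_1}(u^*)\bigr]-\iint u(x)u(y)h(x-y)\,dx\,dy .
\]

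\textbf{Step 2 (absorbing the far-pair term; main obstacle).} The bad term only sees pairs with $|x-y|\ge r_1$. It is bounded by $|k(r_1)|\|u\|_1^2\le|k(r_1)|\,|\Omega|$, and more sharply by $|k(r_1)|\,\|u\|_{L^1(A)}\|u\|_{L^1(A')}$ for any decomposition into mutually far regions. I must show that the Riesz deficit of $K_1$ dominates this.

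I would first try a quantitative argument. Since $K_1\ge c|z|^{-n-2s}$ near $0$, the deficit controls the fractional Pólya--Szegő deficit $[u]_s^2-[u^*]_s^2$ up to a bounded error. At scale $|\Omega|$ one has $[u]_s^2\sim|\Omega|^{-2s/n}$. If a fixed fraction of the mass of $u$ sits at mutual distance $\ge r_1$, then $u$ is far from any translate of $u^*$. In that case I expect the nonlocal interaction between the far pieces to be lost, giving a deficit of order $|\Omega|^{-2s/n}$ times that mass fraction, versus a bad term of order $|\Omega|$ times the same fraction.

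Concretely, if $u=u_1+u_2$ with the supports of $u_1,u_2$ at distance $\ge r_1$, then by the splitting inequality for the fractional seminorm and the subadditivity of $m\mapsto\lambda_{1,s}(B_m)\,m$ in mass, the deficit is $\gtrsim|\Omega|^{-2s/n}\min(\|u_1\|_2^2,\|u_2\|_2^2)$. Meanwhile the bad term is $\le|k(r_1)|\,|\Omega|\,\|u_1\|_2\|u_2\|_2$; I may need to refine this bound using $\|u_i\|_1\le|\Omega_i|^{1/2}\|u_i\|_2$. This wins for $|\Omega|\le V_0$.

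For general $u$ (mass spread continuously rather than split into two far clumps), I would argue by contradiction. Take $|\Omega_k|\to0$ with $\cE(u_k)<\cE(u_k^*)$, rescale to unit volume, and use concentration-compactness for the rescaled fractional-dominated functionals $2|\ln t_k|[v]_s^2+\cE(v)$. Either there is dichotomy, handled by the two-clump estimate above, or compactness, where $v_k$ is close to a translate of the ball eigenfunction. In the compactness case, the far-pair contribution is $o(1)$ relative to the Fraenkel asymmetry. Brasco--Cinti--Vita quantitative stability, together with the $\ln$-weighted gap $\sim|\ln t_k|\,t_k^{-2s}$, then yields the contradiction. This step is the one I expect to require the most care.

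\textbf{Step 3 (rigidity).} If $\lambda_1^{s+\ln}(\Omega)=\lambda_1^{s+\ln}(B)$, all the inequalities above are equalities. In particular the Riesz deficit of the strictly decreasing $K_1$ vanishes, so by Lieb's equality case $u$ is a translate of $u^*$. Hence $\{u>0\}$ is a translate of $B$, and $\Omega$ contains it with the same measure, so $\Omega=B+x_0$ up to a null set. If $\partial\Omega$ is continuous, $\Omega$ coincides with the interior of its closure. An open set with continuous boundary that equals a ball up to a null set must equal the ball, which gives $\Omega=B+x_0$. The converse is immediate by translation invariance.
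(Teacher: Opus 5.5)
There is a genuine gap in your opening reduction ``we may take $u=|u|$''. For this operator that reduction is false, because $K_{s+\ln}<0$ for $|z|>r_{n,s}$. Writing $u=u_+-u_-$,
\[
\cE_{s+\ln}(|u|,|u|)-\cE_{s+\ln}(u,u)=4\,\cE_{s+\ln}(u_+,u_-)=-4\iint u_+(x)\,u_-(y)\,K_{s+\ln}(x-y)\,dx\,dy,
\]
which is positive when the nodal parts interact at distances beyond $r_{n,s}$. For example, let $\Omega$ be the union of two tiny balls at distance larger than $r_{n,s}$. Every nonnegative competitor has Rayleigh quotient at least $\lambda_1^{s+\ln}$ of one small ball, while $\phi_1-\phi_2$ does strictly better, so the first eigenfunction must change sign. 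Hence $\cE_{s+\ln}(u,u)\ge\cE_{s+\ln}(u^*,u^*)$ for nonnegative $u$ only bounds the infimum over nonnegative competitors, and sign-changing competitors must be handled directly.

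This is exactly where the paper uses small volume quantitatively. Set $V=|\Omega|$ and write $u=f-g$ with $|\{f>0\}|=aV$, $|\{g>0\}|=bV$, $\alpha=\|f\|_2^2$, $\beta=\|g\|_2^2$. The cross term satisfies $\cE_{s+\ln}(f,g)\le\|K^-_{s+\ln}\|_\infty\|f\|_1\|g\|_1\le\|K^-_{s+\ln}\|_\infty V\sqrt{ab\alpha\beta}$. On the other side, the scaling formula $\lambda_1^{s+\ln}(B_R)=R^{-2s}F(2\ln\frac1R)$ gives, for small $R_V$, a strict volume gap $\lambda_1^{s+\ln}(B_{R_{aV}})-\lambda_1^{s+\ln}(B_{R_V})\ge cR_V^{-2s}\ln\frac1{R_V}(1-a)$. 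Since $a+b\le1$, the parts $f$ and $g$ therefore gain at least $2cR_V^{-2s}\ln\frac1{R_V}\sqrt{ab\alpha\beta}$. This beats the $O(V)\sqrt{ab\alpha\beta}$ cross term as $V\to0$. Nothing in your proposal plays this role.

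Conversely, your Step 2, which you flag as the main obstacle, is an artifact of subtracting the constant $k(r_1)$; that turns a favorable term into an unfavorable one. Split instead $K_{s+\ln}=K^+_{s+\ln}-K^-_{s+\ln}$ at $r_{n,s}$. For $u\ge0$,
\[
\cE_{s+\ln}(u,u)=\mathcal I_+(u)-\|u\|_2^2\,\|K^-_{s+\ln}\|_1+\iint u(x)u(y)K^-_{s+\ln}(x-y)\,dx\,dy .
\]
Once $2R_V\le r_{n,s}$, the quantity you need to be nonnegative, $\cE_{K_1}(u)-\cE_{K_1}(u^*)-\iint u(x)u(y)h(x-y)\,dx\,dy$, equals $\mathcal I_+(u)-\mathcal I_+(u^*)+\iint u(x)u(y)K^-_{s+\ln}(x-y)\,dx\,dy$. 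This is nonnegative term by term: the first difference by Riesz with truncation, and the far-pair term vanishes for $u^*$. So the concentration-compactness and Brasco--Cinti--Vita argument, which as written is only heuristic, is unnecessary.

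Your rigidity step is also incomplete, for three reasons.
\begin{enumerate}
\item Equality gives $\cE_{K_1}(u)-\cE_{K_1}(u^*)=\iint uuh$, not $0$.
\item $K_1$ is nonintegrable and vanishes beyond $r_1$, so Lieb's strict case does not apply directly. The paper first derives $\operatorname{diam}\Omega\le r_{n,s}$ from the vanishing of the far interaction and $u>0$ a.e., and then passes to a globally strictly decreasing integrable kernel.
\item You need $|\{u>0\}|=|\Omega|$, which requires excluding sign change and using the strict volume gap for balls. Otherwise $u^*$ lives on a smaller ball and nothing follows about $\Omega$.
\end{enumerate}
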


The small-volume result can be understood from the sign structure of the
kernel. The positive part of $K_{s+\ln}$ is supported at short distances,
whereas the negative part appears only beyond the threshold $r_{n,s}$, see Section \ref{preli}.
After symmetric rearrangement, a nonnegative function supported on a set of
small measure is concentrated in a ball of diameter smaller than
$r_{n,s}$, so the negative interaction disappears. For sign-changing
functions, the interaction between the positive and negative parts remains,
but its contribution is of lower order in the volume.

To make this argument quantitative, let $B_R$ be the ball satisfying
$|B_R|=|\Omega|$. We first prove a rearrangement inequality for
nonnegative functions and then compare the first eigenvalues of balls with
different volumes by means of the scaling formula. For a general function
$u=u_+-u_-$, the interaction between $u_+$ and $u_-$ is controlled by the
negative part of the kernel, while the loss of volume of the two nodal
parts produces a positive term of order $R^{-2s}\log\frac1R.$
Since this term dominates the negative interaction when $R$ is sufficiently
small, the Faber--Krahn inequality follows. The equality case forces the
first eigenfunction to have one sign, and the rigidity is then obtained
from the equality case in the Riesz rearrangement inequality.

\smallskip

The situation changes in the large-volume regime. The Fourier symbol $|\xi|^{2s}\ln|\xi|^2$
attains its minimum $-\frac1{es}$
on the sphere
\[
|\xi|=\rho_s,
\qquad
\rho_s:=e^{-1/(2s)}.
\]
This is essentially different from the classical and fractional Laplacians,
whose symbols $|\xi|^2$ and $|\xi|^{2s}$ attain their minimum only at
$\xi=0$. Here the minimizing set is the codimension-one sphere
$\rho_s\mathbb S^{n-1}$. Near this sphere, the symbol grows quadratically in
the normal direction but only quartically along tangential directions. This
anisotropy leads to a different large-volume scale and allows suitably
elongated domains to have smaller first eigenvalue than balls of the same
volume.

\begin{theorem}
\label{teo dom-2}
Let $n\geq2$ and $s\in(0,1)$. The following assertions hold.

\medskip
\noindent
$(i)$ There exist constants $c_{n,s}>0$ and
$V_0=V_0(n,s)>0$ such that, for every bounded open set
$\Omega\subset\R^n$ with $|\Omega|\geq V_0$,
\[\lambda_1^{s+\ln}(\Omega)
\geq
-\frac1{es}
+
c_{n,s}|\Omega|^{-\frac{4}{n+1}}.\]

\medskip
\noindent
$(ii)$ For $l,r>0$, set
\[
Q_{l,r}
:=
(-l,l)\times(-r,r)^{n-1},
\]
and, for $V>0$, define
\[
Q_V
:=
Q_{V^{\frac{2}{n+1}},\,V^{\frac{1}{n+1}}}.
\]
Then there exists $V_1=V_1(n,s)\geq V_0$ such that, for every
$V\geq V_1$,
\[
\lambda_1^{s+\ln}(Q_V)
<
\lambda_1^{s+\ln}(B_V),
\]
where $B_V$ is any ball satisfying $|B_V|=|Q_V|=2^nV.$
\end{theorem}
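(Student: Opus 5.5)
Both parts work on the Fourier side. For $u\in H$ supported in $\Omega$ with $\|u\|_{L^2}=1$, Plancherel gives
\[
\cE(u,u)+\frac1{es}=\int_{\R^n}\Bigl(|\xi|^{2s}\ln|\xi|^2+\frac1{es}\Bigr)|\hat u(\xi)|^2\,d\xi .
\]
Set $m(\xi):=|\xi|^{2s}\ln|\xi|^2+\frac1{es}\ge0$. This symbol vanishes exactly on $|\xi|=\rho_s$, and behaves like $c\,(|\xi|-\rho_s)^2$ near that sphere. It is bounded below by a positive constant away from the sphere, and it is $\gtrsim 1$ at infinity.

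For part $(i)$ I would bound $\int m|\hat u|^2$ from below using only two facts: $\|\hat u\|_{L^2}=1$, and $\|\hat u\|_{L^\infty}\le\|u\|_{L^1}\le|\Omega|^{1/2}$. The aim is an uncertainty principle adapted to the zero set.
\begin{enumerate}[label=(\alph*)]
\item Fix $\delta>0$ small and let $A_\delta:=\{\,||\xi|-\rho_s|<\delta\,\}$.
\item On the complement of $A_\delta$ we have $m\ge c\delta^2$, using the quadratic behaviour near the sphere together with the positive lower bound away from it.
\item Since $|A_\delta|\approx C\delta$, the sup bound gives $\int_{A_\delta}|\hat u|^2\le C\delta|\Omega|$.
\item Hence $\int m|\hat u|^2\ge c\delta^2\,(1-C\delta|\Omega|)$. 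Choosing $\delta\sim|\Omega|^{-1}$ yields only $|\Omega|^{-2}$.
\end{enumerate}
This is too weak, because the exponent $4/(n+1)$ requires exploiting the tangential curvature of the sphere. My guess is that the shell estimate must be replaced by a cap-type restriction estimate, and the argument would go as follows.
\begin{enumerate}[label=(\alph*)]
\item Rewrite $\int_{A_\delta}|\hat u|^2=\langle u,\ u*\check\chi_{A_\delta}\rangle$.
\item Use the decay of $\widehat{d\sigma}$, namely $|\widehat{\chi_{A_\delta}}(x)|\lesssim\delta\,(1+|x|)^{-(n-1)/2}$ for $|x|\lesssim\delta^{-1}$. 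This is the Stein--Tomas mechanism, which gives the bound $\delta\,|\Omega|^{\,?}$ with an improved power.
\item Alternatively, use a Knapp-type decomposition into caps of angular size $\theta$ and radial width $\delta$. A function supported in $\Omega$ concentrates in a cap of volume $\delta\theta^{n-1}$ only if $\Omega$ contains a box of dimensions $\delta^{-1}\times\theta^{-1}$. On the other hand, $m$ restricted to such a cap is controlled through $\theta^4$ from the tangential quartic growth of the symbol along a fixed direction.
\item Balance $\delta^2\sim\theta^4$ against $\delta^{-1}\theta^{-(n-1)}\sim|\Omega|$. With $\theta=\delta^{1/2}$ this gives $|\Omega|\sim\delta^{-(n+1)/2}$, so $\delta^2\sim|\Omega|^{-4/(n+1)}$, which is the claimed rate.
\end{enumerate}
To make step (c) rigorous I would try the following. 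Write $\int m|\hat u|^2\ge c\delta^2\bigl(1-\int_{A_\delta}|\hat u|^2\bigr)$. Then bound $\int_{A_\delta}|\hat u|^2=\|\chi_\Omega\,(u*\check\chi_{A_\delta})\|$-type quantities by $\|\chi_\Omega\check\chi_{A_\delta}*\cdot\|_{L^2\to L^2}$, whose operator norm is $\lesssim|\Omega|\,\sup|\check\chi_{A_\delta}|$ via Schur or Hilbert--Schmidt estimates, combined with the decay in (b). \textbf{This step, extracting the sharp $(n+1)$ exponent uniformly over all shapes $\Omega$, is the main obstacle.} If the Hilbert--Schmidt approach is lossy, I would fall back on the $TT^*$ argument with Stein--Tomas at the endpoint exponent $p=\frac{2(n+1)}{n+3}$. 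By Hölder this gives $\int_{A_\delta}|\hat u|^2\lesssim\delta\,\|u\|_{p}^2\le\delta|\Omega|^{2/p-1}=\delta|\Omega|^{2/(n+1)}$. Choosing $\delta\sim|\Omega|^{-2/(n+1)}$ then yields $\int m|\hat u|^2\gtrsim|\Omega|^{-4/(n+1)}$, which is exactly $(i)$. This is the route I would commit to, using Stein--Tomas for the sphere thickened radially by $\delta$.

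For part $(ii)$ I need two estimates.
\begin{enumerate}[label=(\alph*)]
\item \emph{Ball lower bound.} For a ball of radius $R$, $\lambda_1(B_R)+\frac1{es}\gtrsim R^{-2}$. For a radial first eigenfunction (available by the rearrangement results stated earlier), $\hat u$ is radial with $\|\hat u\|_\infty\le|B|^{1/2}$. The mass of $|\hat u|^2$ in the shell is then at most $\rho_s^{n-1}\delta\cdot\sup_r\int_{S^{n-1}}|\hat u(r\omega)|^2\,d\omega$. For radial $u$ supported in $B_R$, the one-dimensional radial profile satisfies a Bernstein/uncertainty bound: mass concentrated in radial width $\delta$ forces $\delta\gtrsim R^{-1}$. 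This gives $\asymp R^{-2}=c|B|^{-2/n}$.
\item \emph{Box upper bound.} Use a tensor test function on $Q_V$: $u(x)=\phi(x_1/l)\,e^{i\rho_s x_1}\prod_{j\ge2}\psi(x_j/r)$, or its real part, with $\phi,\psi\in C_c^\infty(-1,1)$ and $l=V^{2/(n+1)}$, $r=V^{1/(n+1)}$. Then $\hat u$ is concentrated near $\rho_s e_1$, in a region of size $l^{-1}\times r^{-1}$. Taylor-expanding $m$ there gives $m\lesssim(\xi_1-\rho_s)^2+|\xi'|^4$. Hence the Rayleigh quotient is at most $C(l^{-2}+r^{-4})=CV^{-4/(n+1)}$, with Schwartz tails handled by the polynomial growth of $m$.
\end{enumerate}
Since $4/(n+1)>2/n$ for $n\ge2$, the box value $CV^{-4/(n+1)}$ falls below the ball value $c|B_V|^{-2/n}$ for $V$ large, which proves $(ii)$.
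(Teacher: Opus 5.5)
Your part (i) is exactly the paper's argument: Tomas--Stein at $p_*=\frac{2(n+1)}{n+3}$, then H\"older to get shell mass $\lesssim\delta|\Omega|^{2/(n+1)}$, then $\delta\sim|\Omega|^{-2/(n+1)}$. Your box upper bound is also the paper's: the modulated tensor test function combined with $\tau_s(\rho_se_1+\eta)\lesssim\eta_1^2+|\eta'|^4$.

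The gap is in the ball lower bound $\mu_s(B_R)\gtrsim R^{-2}$. You reduce to a radial first eigenfunction ``by the rearrangement results stated earlier''. Those results only give $\cE_{s+\ln}(f,f)\geq\cE_{s+\ln}(f^\ast,f^\ast)$ for \emph{nonnegative} $f$ whose rearranged support has diameter at most $r_{n,s}$, so that $K_{s+\ln}^-$ vanishes on $\operatorname{supp}f^\ast\times\operatorname{supp}f^\ast$. On a large ball the diameter condition fails. Moreover, the relevant functions change sign: the near-minimizers are real parts of $e^{i\rho_sx_1}\phi(x/R)$. Nothing in the paper or in your argument shows that the first eigenspace of $B_R$ contains a radial function. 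Since the form is block-diagonal over spherical-harmonic sectors, a bound in the radial sector alone does not bound $\mu_s(B_R)$. Without radiality your remaining tools are too weak. Both $\|\widehat u\|_\infty\lesssim|B_R|^{1/2}$ and a one-dimensional Bernstein bound along each ray $t\mapsto\widehat u(t\omega)$ only give shell mass $\lesssim\delta R^n$, hence $\mu_s(B_R)\gtrsim R^{-2n}$. This is not a weakness of the method: pointwise values $|\widehat u(\rho_se_1)|^2\sim R^n$ genuinely occur.

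The repair is to average over directions. For every $u\in L^2$ vanishing outside $B_R$,
\[
\int_{\mathbb S^{n-1}}|\widehat u(r\omega)|^2\,d\omega\leq C_{n,s}\,R\,\|u\|_2^2,\qquad \frac{\rho_s}{2}\leq r\leq\frac{3\rho_s}{2}.
\]
To see this, write $u=u\,\chi(\cdot/R)$ with $\chi\in C_c^\infty$ and $\chi=1$ on $B_1$. Then $\widehat u$ equals $\widehat u$ convolved with $R^n\widehat\chi(R\,\cdot)$, up to $(2\pi)^{-n/2}$. Cauchy--Schwarz, together with the fact that a sphere of radius $r\sim\rho_s$ meets a ball of radius $t$ in area $\lesssim t^{n-1}$, gives the factor $R^n\cdot R^{1-n}=R$. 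Integrating in $r$ gives shell mass $\lesssim\delta R$. Taking $\delta\sim R^{-1}$ then yields $\mu_s(B_R)\gtrsim R^{-2}$, by the same bookkeeping as in your part (i).

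With this fix your ball estimate is a genuinely different route from the paper's. The paper uses $\|\nabla_\xi\widehat u\|_2=\|xu\|_2\leq R$ and a Fourier cutoff to the annulus $\bigl||\xi|-\rho_s\bigr|<\delta_s$. It then applies the commutator identity $i[T-\rho_s,A]=T$, with $A$ the dilation generator, to get $\|(T-\rho_s)v\|_2\gtrsim R^{-1}$. Your trace route runs exactly parallel to (i): an $L^2$ trace bound governed by the radius replaces Tomas--Stein governed by the volume. This makes transparent why balls give $|B|^{-2/n}$ while general sets only give $|\Omega|^{-4/(n+1)}$. The commutator route has the advantage of avoiding restriction-type estimates entirely.
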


The estimates used in the proof of Theorem~\ref{teo dom-2} give more precise information on the
large-volume behavior.

\begin{remark}
\medskip
\noindent
$(i)$ For the anisotropic boxes $Q_V$, there exists $C_{n,s}>0$ such that
\[
\lambda_1^{s+\ln}(Q_V)
+\frac1{es}
\leq
C_{n,s}V^{-\frac{4}{n+1}}
\]
for all sufficiently large $V$.

\medskip
\noindent
$(ii)$ Let $B_V$ be a ball satisfying $|B_V|=|Q_V|=2^nV.$
Then there exists $C_{n,s}\geq1$ such that
\[
C_{n,s}^{-1}V^{-\frac2n}
\leq
\lambda_1^{s+\ln}(B_V)+\frac1{es}
\leq
C_{n,s}V^{-\frac2n}
\]
for all sufficiently large $V$. 
Thus both $\lambda_1^{s+\ln}(Q_V)$ and
$\lambda_1^{s+\ln}(B_V)$ converge to the bottom
$-1/(es)$ of the whole-space Fourier symbol.
\end{remark}

The proof is based on the geometry of the minimum set of the shifted symbol
\[
\tau_s(\xi)
=
|\xi|^{2s}\ln|\xi|^2+\frac1{es}.
\]
Lemma~\ref{lem:symbol-geometry} shows that $\tau_s(\xi)
\asymp
\operatorname{dist}\bigl(\xi,\rho_s\mathbb S^{n-1}\bigr)^2$
near the minimizing sphere and, more precisely,
\[
\tau_s(\rho_se_1+\eta)
\lesssim
\eta_1^2+|\eta'|^4.
\]
Thus the symbol grows quadratically in the normal direction and quartically
in tangential directions. For a general domain of volume $V$, the
Tomas--Stein restriction theorem \cite[Section~2.2, equation~(2.7)]{CueninMerz2021}
(see also \cite[Theorem~3]{Stein1986} and \cite{Tomas1975}) controls the Fourier mass in a
$\delta$-neighborhood of $\rho_s\mathbb S^{n-1}$ by 
\[
\int_{\{||\xi|-\rho_s|<\delta\}}
|\widehat u(\xi)|^2\,d\xi
\lesssim
\delta V^{\frac{2}{n+1}}.
\]
Taking $\delta\asymp V^{-2/(n+1)}$ leaves a fixed amount of Fourier mass
outside this neighborhood, where $\tau_s\gtrsim\delta^2$. This gives $\mu_s(\Omega)
\gtrsim
V^{-\frac{4}{n+1}},$
which proves the lower bound in Theorem~\ref{teo dom-2}.

\smallskip

The anisotropic estimate suggests the boxes $Q_V$. We test the shifted
Rayleigh quotient with
\[
u_{l,r}(x)
=
l^{-1/2}r^{-(n-1)/2}
e^{i\rho_sx_1}
\phi\left(\frac{x_1}{l}\right)
\psi\left(\frac{x'}{r}\right).
\]
Its Fourier transform is concentrated near $\rho_se_1$ at scales
$l^{-1}$ in the normal direction and $r^{-1}$ in the tangential directions.
Lemma~\ref{lem:symbol-geometry} then gives $\mu_s(Q_{l,r})
\lesssim
l^{-2}+r^{-4}.$
Under the volume constraint $lr^{n-1}\asymp V,$
the two terms are balanced by $l\asymp r^2,\,
r\asymp V^{\frac1{n+1}},$
and hence $\mu_s(Q_V)
\lesssim
V^{-\frac{4}{n+1}}.$

\smallskip

For balls, the relevant scale is different. If $B_R$ is a large ball, an
isotropically localized test function modulated by $e^{i\rho_sx_1}$ gives $\mu_s(B_R)\lesssim R^{-2}.$
For the reverse estimate, localization of $u$ to $B_R$ yields $\|\nabla_\xi\widehat u\|_2\leq R.$
After cutting off the Fourier transform near $|\xi|=\rho_s$, the dilation
commutator
\[
i[T-\rho_s,A]=T,
\qquad
T:=\sqrt{-\Delta},
\]
gives $\|(T-\rho_s)u\|_2\gtrsim R^{-1}.$
Together with $\tau_s(\xi)\gtrsim\bigl(|\xi|-\rho_s\bigr)^2,$
this yields
\[
\mu_s(B_R)\gtrsim R^{-2}\asymp |B_R|^{-2/n}.
\]

Indeed, for $r>0$, scaling gives
\[
r^{2s}\lambda_1^{s+\ln}(r\Omega)
=
\inf_{\substack{
u\in\mathcal H_0^{s+\ln}(\Omega)\\
\|u\|_2=1}}
\left\{
\mathcal E_{s+\ln}(u,u)
+
2\ln\frac1r\,\mathcal E_s(u,u)
\right\}.
\]
Consequently, $\lim_{r\downarrow0}
\frac{r^{2s}\lambda_1^{s+\ln}(r\Omega)}
{2\ln(1/r)}
=
\lambda_{1,s}(\Omega).$
For $r>0$ sufficiently small, both $r\Omega$ and $rB$ fall into the
small-volume regime of Theorem~\ref{teo dom-1}. Hence
\[
\lambda_1^{s+\ln}(r\Omega)
\geq
\lambda_1^{s+\ln}(rB).
\]
Multiplying by $r^{2s}$, dividing by $2\ln(1/r)$, and letting
$r\downarrow0$ gives $\lambda_{1,s}(\Omega)
\geq
\lambda_{1,s}(B).$

\smallskip

Another related nonlocal operator is the logarithmic Laplacian
$(-\Delta)^{\ln}$, whose Fourier symbol is $2\ln|\xi|$
and which arises as the derivative of the fractional Laplacian at
$s=0$.
Chen and Weth \cite{ChenWeth2019} established the representation
\[
\begin{aligned}
(-\Delta)^{\ln}u(x)
={}&
c_n\,\operatorname{p.v.}
\int_{B_1(0)}
\frac{u(x)-u(x+y)}{|y|^n}\,dy-
c_n
\int_{\R^n\setminus B_1(0)}
\frac{u(x+y)}{|y|^n}\,dy
+
\rho_nu(x),
\end{aligned}
\]
where
\[
c_n=\frac{\Gamma(n/2)}{\pi^{n/2}},
\qquad
\rho_n=2\ln2+\psi(n/2)-\gamma,
\]
$\gamma$ denotes the Euler--Mascheroni constant and
$\psi=\Gamma'/\Gamma$ is the digamma function.

They also proved the Faber--Krahn inequality
\begin{equation}\label{FK-ln-1}
\lambda_1^{\ln}(\Omega)
\geq
\lambda_1^{\ln}(B),
\qquad
|B|=|\Omega|,
\end{equation}
for bounded Lipschitz domains. Their argument uses
\[
\lambda_1^{\ln}(\Omega)
=
\lim_{s\downarrow0}
\frac{\lambda_{1,s}(\Omega)-1}{s}
\]
together with the fractional Faber--Krahn inequality. As observed in
\cite{ChenWeth2019}, this limiting argument does not determine the equality
case, and the rigidity of \eqref{FK-ln-1} remained open. We give an
affirmative answer in the following theorem.

\begin{theorem}
\label{thm:rigidity}
Let $\Omega\subset\R^n$ be a bounded Lipschitz domain, and let $B$ be a ball
satisfying $|B|=|\Omega|$. Then $\lambda_1^{\ln}(\Omega)
\geq
\lambda_1^{\ln}(B).$
Moreover,
\[
\lambda_1^{\ln}(\Omega)
=
\lambda_1^{\ln}(B)
\]
if and only if $\Omega$ is a translate of $B$.
\end{theorem}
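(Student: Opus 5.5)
Our plan is to prove the inequality and its rigidity directly, using the integral representation of $(-\Delta)^{\ln}$ and Riesz rearrangement, rather than passing to the limit $s\downarrow0$. For $u\in\mathcal H_0^{\ln}(\Omega)$ the quadratic form, following \cite{ChenWeth2019}, is
\[
\mathcal E_{\ln}(u,u)=\frac{c_n}{2}\iint_{|x-y|<1}\frac{|u(x)-u(y)|^2}{|x-y|^n}\,dx\,dy-c_n\iint_{|x-y|\geq1}\frac{u(x)u(y)}{|x-y|^n}\,dx\,dy+\rho_n\|u\|_2^2 .
\]
Let $u\ge0$ be the first eigenfunction on $\Omega$; it is positive in $\Omega$, as shown in \cite{ChenWeth2019}. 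Let $u^*$ be its symmetric decreasing rearrangement, which lives in $\mathcal H_0^{\ln}(B)$ and satisfies $\|u^*\|_2=\|u\|_2$. We then use the layer-type kernel decomposition
\[
\mathcal E_{\ln}(u,u)=c_n\!\int_{\R^n}\! u^2(x)\,h_\Omega(x)\,dx-c_n\iint u(x)u(y)\,j(|x-y|)\,dx\,dy+\rho_n\|u\|_2^2 .
\]
Here $j(r)=r^{-n}$ for all $r>0$ is symmetric decreasing. The term $h_\Omega(x)=\int_{B_1(x)\setminus\Omega}|x-y|^{-n}dy$ is a boundary (killing) term; it appears after writing $|u(x)-u(y)|^2=u(x)^2+u(y)^2-2u(x)u(y)$ on $\{|x-y|<1\}$. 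The term $\int u^2\int_{B_1(x)}|x-y|^{-n}$ diverges, so this step must be done with a truncated kernel $j_\varepsilon=\min(j,\varepsilon^{-n})$. We then let $\varepsilon\to0$ and keep the divergent constant parts identical for $u$ and $u^*$, since they depend only on $\|u\|_2$.

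The key inequalities are the following. By Riesz's rearrangement inequality,
\[
\iint u(x)u(y)\,j_\varepsilon(|x-y|)\le\iint u^*(x)u^*(y)\,j_\varepsilon(|x-y|).
\]
For the killing term, we note that $\int u^2 h_\Omega=\int u^2 \int_{\Omega^c}g(x-y)\,dy\,dx$ with $g=|z|^{-n}1_{|z|<1}$. Writing $1_{\Omega^c}=1-1_\Omega$ turns this into another Riesz-type interaction between $u^2$ and $1_\Omega$. Since $(u^2)^*=(u^*)^2$ and $1_\Omega^*=1_B$, we get $\int u^2h_\Omega\ge\int (u^*)^2h_B$. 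Together these give $\mathcal E_{\ln}(u^*,u^*)\le\mathcal E_{\ln}(u,u)$, hence $\lambda_1^{\ln}(B)\le\lambda_1^{\ln}(\Omega)$.

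For rigidity, suppose equality holds. Then $u^*$ is a minimizer on $B$, and equality must hold in the Riesz inequality for the interaction with the kernel $j_\varepsilon$. This is the step I expect to be the main obstacle, because the equality terms come as differences of divergent quantities. To handle it, we isolate the finite piece
\[
\iint\bigl[u(x)u(y)-u^*(x)u^*(y)\bigr]\bigl(j(|x-y|)-j(1)\bigr)_+\,dx\,dy
\]
or alternatively use the long-range part $\iint_{|x-y|\ge1}u(x)u(y)|x-y|^{-n}$, which is finite. This part is a Riesz functional with the kernel $\min(|z|^{-n},1)$, which is strictly decreasing on $|z|\ge1$. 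The function $\min(|z|^{-n},1)$ is constant on $B_1$, so strict monotonicity holds only outside $B_1$. We therefore apply Lieb's strict rearrangement theorem in the following form, which is a known refinement: if $j$ is symmetric decreasing and strictly decreasing on a set of radii of positive measure, then equality for positive $u$ forces $u$ to be a translate of $u^*$. Alternatively, we can write $\min(|z|^{-n},1)=\int_1^\infty 1_{B_t}(z)\,(n t^{-n-1})\,dt$ and use Burchard's equality characterization for indicators of balls $B_t$ with $t$ large enough that $2t$ exceeds the diameter constraints.

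From $u=u^*(\cdot-x_0)$ a.e. we get $\{u>0\}=B+x_0$ a.e. Since $u>0$ in $\Omega$ and $u=0$ outside, $|\Omega\,\triangle\,(B+x_0)|=0$. Because $\Omega$ is a Lipschitz domain, hence open and equal to the interior of its closure, this gives $\Omega=B+x_0$. The converse direction is immediate by translation invariance.
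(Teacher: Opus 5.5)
\textbf{There is a genuine gap in the rigidity step.} The ``known refinement'' you invoke is false. It claims that equality in Riesz's inequality, with a symmetric decreasing kernel that is strictly decreasing only on a set of radii of positive measure, forces $u=u^*(\cdot-x_0)$. Your own kernel $k(z)=\min\{|z|^{-n},1\}$ is a counterexample.

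It is constant on $B_1$. So if $\operatorname{diam}\Omega<1$, then $k(x-y)=1$ for all $x,y\in\Omega$, and, by the isodiametric inequality, also for all $x,y\in B$. Hence $\iint u(x)u(y)k(x-y)\,dx\,dy=\|u\|_1^2=\iint u^*(x)u^*(y)k(x-y)\,dx\,dy$ for every nonnegative $u$ supported in $\Omega$, whether or not $\Omega$ is a ball. Since you never rescale, equality in the long-range piece carries no information for such domains.

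The Burchard variant fails for the same reason. The balls $B_t$ in the layer-cake representation of $k$ have $t\ge1$, so for $\operatorname{diam}\Omega<1$ one has $\mathbf 1_{B_t}(x-y)\equiv1$ on all relevant pairs, and equality is automatic. Burchard's characterization applies only in the nondegenerate regime $|r_a-r_b|<t<r_a+r_b$ for the radii of the level sets, and taking $t$ large moves you away from it.

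The short-range piece with kernel $(|z|^{-n}-1)_+$ is no substitute as it stands. Its interaction integral is $+\infty$ for every nonzero $u\ge0$ in the form domain, and the kernel vanishes for $|z|\ge1$.

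\textbf{What is missing, and how to repair it.} You need a bounded or integrable kernel, extracted from the form, that is strictly decreasing on every distance realized in $\Omega\times\Omega$ and $B\times B$.

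The paper obtains it in three steps. First it uses the exact scaling law $\lambda_1^{\ln}(r\Omega)=\lambda_1^{\ln}(\Omega)-2\log r$, which preserves eigenvalue differences, to reduce to $\operatorname{diam}\Omega<1$; then the long-range term vanishes for both $u$ and $u^*$. Next it splits $|z|^{-n}\mathbf 1_{\{|z|<1\}}=J+R$ with $J=\varepsilon(1-|z|)_+$ and $R$ still radially nonincreasing, and deduces equality for $J$. Finally it replaces $J$ by a strictly decreasing integrable $\widetilde J$ that agrees with $J$ on distances $\le d<1$.

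Your own framework can also be repaired without rescaling.
\begin{itemize}
\item Your decomposition double counts. The renormalized diagonal term $\|u\|_2^2\int_{B_1}\min\{|z|^{-n},\varepsilon^{-n}\}\,dz$ already contains $\int u^2h_\Omega$, so the killing term must be dropped. The correct identity is $\mathcal E_{\ln}(u,u)=\lim_{\varepsilon\to0}\bigl(c_n\|u\|_2^2\int_{B_1}\min\{|z|^{-n},\varepsilon^{-n}\}\,dz-c_n\iint u(x)u(y)j_\varepsilon(x-y)\,dx\,dy\bigr)+\rho_n\|u\|_2^2$.
\item Riesz's inequality for $j_\varepsilon$ alone then gives $\lambda_1^{\ln}(\Omega)\ge\lambda_1^{\ln}(B)$ at every scale. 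This is a valid alternative to the paper's rescaling.
\item For rigidity, set $D_\varepsilon:=\iint\bigl(u^*(x)u^*(y)-u(x)u(y)\bigr)j_\varepsilon(x-y)\,dx\,dy$. It is nonnegative, and nondecreasing as $\varepsilon\downarrow0$ because $j_\varepsilon-j_{\varepsilon'}$ is symmetric decreasing for $\varepsilon<\varepsilon'$.
\item Equality of energies therefore forces $D_\varepsilon=0$ for all $\varepsilon\in(0,1]$.
\item Integrating against $\varepsilon^n\,d\varepsilon$ gives the bounded kernel $\int_0^1\varepsilon^n j_\varepsilon\,d\varepsilon$. It equals $1-\frac{n}{n+1}|z|$ for $|z|<1$ and $\frac{|z|^{-n}}{n+1}$ for $|z|\ge1$, so it is strictly decreasing on $(0,\infty)$. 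Lieb--Loss, Theorem~3.9, then applies and yields $u=u^*(\cdot-x_0)$.
\end{itemize}
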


The proof uses the exact scaling law in Lemma \ref{lem:scaling1}
\[
\lambda_1^{\ln}(r\Omega)
=
\lambda_1^{\ln}(\Omega)-2\ln r,
\]
which preserves the difference of first eigenvalues under a common dilation.
We therefore scale the domains so that their diameter is smaller than one,
where the logarithmic Dirichlet form admits a strict rearrangement argument.
The equality case in the Riesz rearrangement inequality then yields the
rigidity.

\smallskip

  Faber--Krahn type problems have been studied for a variety of operators and
boundary conditions. For the Robin Laplacian with positive boundary
parameter, the ball remains the minimizer of the first eigenvalue; see, for
instance, \cite{DanersKennedy2007}. For mixed local--nonlocal operators,
Biagi, Dipierro, Valdinoci, and Vecchi
\cite{BiagiDipierroValdinociVecchi2023} proved a quantitative
Faber--Krahn inequality. Related spectral inequalities have also been
obtained for Dirac-type operators in suitable parameter regimes
\cite{BehrndtFrymarkHolzmannStelzer2024}, while the corresponding
shape-optimization problem on Riemannian manifolds was studied by
Lamboley and Sicbaldi \cite{LS20}.

On the other hand, the ball is not always optimal once the operator or the
boundary condition is changed. Freitas and Krej\v{c}i\v{r}\'ik
\cite{FreitasKrejcirik2015} disproved Bareket's conjecture for the Robin
Laplacian with sufficiently large negative boundary parameter, showing that
the ball need not maximize the first eigenvalue under a volume constraint;
see also the quantitative and small-coupling results
\cite{CitoLaManna2021,GavitoneKrejcirikPaoli2026}. For vector-valued
problems, Krej\v{c}i\v{r}\'ik, Lamberti, and Zaccaron \cite{KLZ25}
proved the failure of the Faber--Krahn inequality for the vector Laplacian,
and Henrot, Mazari-Fouquer, and Privat \cite{HMP24} showed that the ball is
not even a local minimizer for the three-dimensional Dirichlet--Stokes
problem. Reverse Faber--Krahn type inequalities and their limitations under
mixed Robin--Neumann boundary conditions were recently studied in
\cite{ABD25}.

The phenomenon obtained here is of a different form. We keep the same scalar
operator $(-\Delta)^{s+\ln}$, the same zero exterior Dirichlet condition,
and the same volume constraint, but the minimizing behavior changes with the
volume: balls minimize the first eigenvalue for sufficiently small volume,
whereas for sufficiently large volume they are outperformed by anisotropic
boxes. The change is caused by the nonhomogeneous Fourier symbol
$|\xi|^{2s}\ln|\xi|^2$ and, in particular, by its codimension-one set of
minimizers.

\smallskip
The remainder of the paper is organized as follows. Section~2 collects the
kernel decomposition and the basic estimates for the shifted Fourier symbol.
Section~3 studies the large-volume regime and proves the failure of the
Faber--Krahn inequality. Section~4 treats the small-volume Faber--Krahn
inequality and its equality case. The Appendix proves the rigidity result for
the logarithmic Laplacian.

\section{Kernel Decomposition and Fourier-Symbol Geometry}
\label{preli}

Throughout this section, let $n\geq1$ and $s\in(0,1)$. We use the unitary
Fourier transform
\[
\widehat u(\xi)
:=
(2\pi)^{-n/2}
\int_{\R^n}e^{-ix\cdot\xi}u(x)\,dx,
\qquad
\xi\in\R^n,
\]
and adopt the convention $r^{2s}\ln r^2=0$ at $r=0.$

We first recall the decomposition of the kernel of
$(-\Delta)^{s+\ln}$. For $r>0$, let
\[
K_{s+\ln}(r)
:=
c_{n,s}\bigl(b_{n,s}-2\ln r\bigr)r^{-n-2s},
\]
and set $r_{n,s}:=e^{b_{n,s}/2}.$
Define
\[
K_{s+\ln}
=
K_{s+\ln}^+-K_{s+\ln}^-,
\qquad
K_{s+\ln}^\pm
:=
\max\{\pm K_{s+\ln},0\}.
\]
Since
\[
b_{n,s}-2\ln r
\begin{cases}
>0,&0<r<r_{n,s},\\
=0,&r=r_{n,s},\\
<0,&r>r_{n,s},
\end{cases}
\]
we have
\[
K_{s+\ln}^-(r)=0
\quad\text{for }0<r\leq r_{n,s},
\qquad
K_{s+\ln}^+(r)=0
\quad\text{for }r\geq r_{n,s}.
\]
Moreover, $K_{s+\ln}^+$ is strictly decreasing on $(0,r_{n,s})$, since
\[
\frac{d}{dr}K_{s+\ln}^+(r)
=
c_{n,s}r^{-n-2s-1}
\Bigl[
-2-(n+2s)\bigl(b_{n,s}-2\ln r\bigr)
\Bigr]
<0.
\]
The negative part satisfies $K_{s+\ln}^-\in
L^1(\R^n)\cap L^\infty(\R^n).$

\smallskip

 For real-valued $u\in C_c^\infty(\Omega)$, the associated quadratic form is
\[
\mathcal E_{s+\ln}(u,u)
=
\int_{\R^n}
u(x)(-\Delta)^{s+\ln}u(x)\,dx.
\]
The first Dirichlet eigenvalue of $(-\Delta)^{s+\ln}$ is defined
variationally by
\[
\lambda_1^{s+\ln}(\Omega)
=
\inf_{\substack{
u\in\mathcal H_0^{s+\ln}(\Omega)\\
\|u\|_{L^2(\Omega)}=1}}
\mathcal E_{s+\ln}(u,u),
\]
where $\mathcal H_0^{s+\ln}(\Omega)
:=
\overline{C_c^\infty(\Omega)}
^{\,\|\cdot\|_{\mathcal H^{s+\ln}(\R^n)}}$ and $\mathcal H^{s+\ln}(\R^n)$ is given in \cite[Proposition 1.3]{ChenChenHauer2026}.

For $u\in\mathcal H_0^{s+\ln}(\Omega)$, the quadratic form is also given by
\[
\mathcal E_{s+\ln}(u,u)
=
\frac12
\iint_{\R^n\times\R^n}
(u(x)-u(y))^2
K_{s+\ln}(|x-y|)\,dx\,dy.
\]
Writing
\[
\mathcal I_\pm(u)
:=
\frac12
\iint_{\R^n\times\R^n}
(u(x)-u(y))^2
K_{s+\ln}^\pm(|x-y|)\,dx\,dy.
\]

\smallskip

We next record the corresponding properties of the Fourier symbol. Define
\begin{equation}\label{eq:shifted}
\tau_s(\xi)
:=
|\xi|^{2s}\ln|\xi|^2+\frac1{es},
\qquad
\xi\in\R^n.
\end{equation}
Since the function $m_s(r):=r^{2s}\ln r^2$
has its unique minimum at $\rho_s:=e^{-1/(2s)},$
with $m_s(\rho_s)=-\frac1{es},$
we have
\begin{equation}\label{eq:sphere}
    \tau_s(\xi)\geq0,
\qquad
\tau_s(\xi)=0
\quad\Longleftrightarrow\quad
|\xi|=\rho_s.
\end{equation}
Accordingly, for a bounded open set $\Omega\subset\R^n$, define the shifted
first eigenvalue
\begin{equation}\label{eq:shifted-rayleigh}
\mu_s(\Omega)
:=
\lambda_1^{s+\ln}(\Omega)+\frac1{es}
=
\inf_{\substack{
u\in\mathcal H_0^{s+\ln}(\Omega)\\
\|u\|_2=1}}
\int_{\R^n}
\tau_s(\xi)|\widehat u(\xi)|^2\,d\xi.
\end{equation}

The following estimates describe $\tau_s$ near and away from its minimizing
set.

\begin{lemma}
\label{lem:symbol-geometry}
There exist constants $c_s,C_s,\widetilde{c}_s,\widetilde{C}_s,\delta_s>0$ such that
\begin{equation}\label{eq:tau-quadratic}
c_s(r-\rho_s)^2
\leq
r^{2s}\ln r^2+\frac1{es}
\leq
C_s(r-\rho_s)^2
\end{equation}
for every $r\ge 0$ satisfying $|r-\rho_s|\leq \delta_s$. Moreover, for every
$0<\delta\leq\delta_s$,
\begin{equation}\label{eq:tau-away}
\inf_{\substack{r\ge0\\ |r-\rho_s|\geq\delta}}
\left(
r^{2s}\ln r^2+\frac1{es}
\right)
\geq
\widetilde{c}_s\delta^2.
\end{equation}
Finally, writing $\eta=(\eta_1,\eta')\in\R\times\R^{n-1}$ and
$e_1:=(1,0,\ldots,0)\in\R^n$, one has
\begin{equation}\label{eq:anisotropic-symbol}
\tau_s(\rho_s e_1+\eta)
\leq
C_s\bigl(\eta_1^2+|\eta'|^4\bigr),
\qquad
\eta\in\R^n,
\end{equation}
where $\tau_s$ and $\rho_s$ are defined in \eqref{eq:shifted} and \eqref{eq:sphere}, respectively.
\end{lemma}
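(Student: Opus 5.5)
We work with the one-variable function $g(r):=m_s(r)+\frac1{es}=r^{2s}\ln r^2+\frac1{es}$ on $[0,\infty)$. The plan has three steps: a Taylor expansion at $\rho_s$ for \eqref{eq:tau-quadratic}, a monotonicity argument for \eqref{eq:tau-away}, and a geometric expansion of $|\rho_se_1+\eta|$ for \eqref{eq:anisotropic-symbol}.

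\emph{Step 1: local quadratic bounds.} Compute
\[
g'(r)=2r^{2s-1}\bigl(s\ln r^2+1\bigr),
\]
which vanishes only at $r=\rho_s$. Differentiating once more,
\[
g''(r)=2r^{2s-2}\bigl((2s-1)(s\ln r^2+1)+2s\bigr),
\]
so $g''(\rho_s)=4s\rho_s^{2s-2}>0$. Since $g$ is smooth on $(0,\infty)$, we can pick $\delta_s<\rho_s/2$ so that $g''\in[\tfrac12 g''(\rho_s),\,2g''(\rho_s)]$ on $[\rho_s-\delta_s,\rho_s+\delta_s]$. Taylor's formula with $g(\rho_s)=g'(\rho_s)=0$ then gives \eqref{eq:tau-quadratic} with
\[
c_s=\tfrac14 g''(\rho_s),\qquad C_s=g''(\rho_s).
\]
The restriction $\delta_s<\rho_s/2$ keeps us away from $r=0$, where $g$ is only continuous.

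\emph{Step 2: the bound away from $\rho_s$.} The sign of $g'$ shows that $g$ is strictly decreasing on $[0,\rho_s]$ (continuous at $0$ by our convention) and strictly increasing on $[\rho_s,\infty)$. Hence for $0<\delta\le\delta_s$,
\[
\inf_{|r-\rho_s|\ge\delta} g=\min\{g(\rho_s-\delta),\,g(\rho_s+\delta)\}\ge c_s\delta^2
\]
by Step 1. Also, $g(r)\to\infty$ as $r\to\infty$, so the infimum over $r\ge\rho_s+\delta$ is attained at the endpoint. This gives \eqref{eq:tau-away} with $\widetilde c_s=c_s$.

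\emph{Step 3: the anisotropic bound.} Set $r=|\rho_se_1+\eta|=\sqrt{(\rho_s+\eta_1)^2+|\eta'|^2}$. We split into a near and a far regime.

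In the near regime $|\eta|\le\epsilon$, with $\epsilon$ small, we have
\[
r-\rho_s=\frac{2\rho_s\eta_1+\eta_1^2+|\eta'|^2}{r+\rho_s},
\]
so
\[
|r-\rho_s|\le C\bigl(|\eta_1|+|\eta'|^2\bigr).
\]
Choosing $\epsilon$ so that $|r-\rho_s|\le\delta_s$, Step 1 yields
\[
\tau_s\le C_s(r-\rho_s)^2\le C\bigl(\eta_1^2+|\eta'|^4\bigr).
\]
The key point is that the tangential displacement only moves $r$ at second order.

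In the far regime $|\eta|\ge\epsilon$, we have $\eta_1^2+|\eta'|^4\ge c_\epsilon>0$, since either $|\eta_1|\ge\epsilon/\sqrt2$ or $|\eta'|\ge\epsilon/\sqrt2$. On the other side, $r\le\rho_s+|\eta|$, and
\[
\tau_s(\xi)\le C\bigl(1+|\xi|^{2s+1}\bigr)\le C'\bigl(1+|\eta|^{3}\bigr).
\]
It therefore suffices to show $1+|\eta|^3\lesssim\eta_1^2+|\eta'|^4$ for $|\eta|\ge\epsilon$. This is where the growth exponent matters, and it is the step I expect to be the main obstacle. Cubic growth is not dominated by $\eta_1^2$ along the $e_1$ direction, but the symbol itself grows only like $|\eta_1|^{2s}\ln|\eta_1|$ there. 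So instead I would use the sharper bound
\[
\tau_s(\xi)\le C_\epsilon\bigl(1+|\xi|^{2s}\ln(2+|\xi|)\bigr)\le C_\epsilon\bigl(1+|\eta|^2\bigr),
\]
valid since $2s<2$. Then the estimate reduces to showing
\[
1+|\eta|^2\lesssim\eta_1^2+|\eta'|^4 \quad\text{for } |\eta|\ge\epsilon.
\]
This follows from $|\eta'|^2\le 1+|\eta'|^4$ together with the lower bound $c_\epsilon$ absorbing the constant, after enlarging $C_s$.
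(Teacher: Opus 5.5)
Your proof is correct and follows the same route as the paper. Both arguments use a second-order Taylor expansion at $\rho_s$ with $m_s''(\rho_s)=4s\rho_s^{2s-2}>0$, and both use the identity $|\rho_se_1+\eta|-\rho_s=(2\rho_s\eta_1+|\eta|^2)/(|\rho_se_1+\eta|+\rho_s)$ in the near regime. Your only departures are small tidying steps: you get \eqref{eq:tau-away} from the monotonicity of $g$ on each side of $\rho_s$ (giving $\widetilde c_s=c_s$) instead of the paper's positive infimum $m_0$ and case split. You also make the far regime explicit via $\tau_s(\rho_se_1+\eta)\lesssim 1+|\eta|^2\lesssim\eta_1^2+|\eta'|^4$ for $|\eta|\ge\epsilon$, where the paper only appeals to compactness and to $0<s<1$.
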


\begin{proof}
Recall that $m_s(r)=r^{2s}\ln r^2$ and $\rho_s=e^{-1/(2s)}.$
Since
\[
m_s'(\rho_s)=0,
\qquad
m_s''(\rho_s)=4s\rho_s^{2s-2}>0,
\qquad
m_s(\rho_s)=-\frac1{es},
\]
the continuity of $m_s''$ and Taylor's theorem imply that there exist
$c_s,C_s,\delta_s>0$ such that
\[
c_s(r-\rho_s)^2
\leq
m_s(r)+\frac1{es}
\leq
C_s(r-\rho_s)^2
\]
whenever $|r-\rho_s|\leq\delta_s$. This proves
\eqref{eq:tau-quadratic}. To prove \eqref{eq:tau-away}, set
\[
m_0:=
\inf_{\substack{r\geq0\\ |r-\rho_s|\geq\delta_s}}
\left(m_s(r)+\frac1{es}\right).
\]
Since $m_s(r)+1/(es)$ vanishes only at $r=\rho_s$,
\[
m_s(r)+\frac1{es}\to\frac1{es}\quad\text{as }r\to0^+,
\qquad
m_s(r)+\frac1{es}\to+\infty\quad\text{as }r\to\infty,
\]
we have $m_0>0$. Hence, for every $0<\delta\leq\delta_s$ and
$|r-\rho_s|\geq\delta$, either $|r-\rho_s|\leq\delta_s$, in which case
\[
m_s(r)+\frac1{es}\geq c_s(r-\rho_s)^2\geq c_s\delta^2,
\]
or $|r-\rho_s|\geq\delta_s$, in which case $m_s(r)+\frac1{es}\geq m_0
\geq \frac{m_0}{\delta_s^2}\delta^2.$

\smallskip

It remains to prove \eqref{eq:anisotropic-symbol}. Writing
$\eta=(\eta_1,\eta')\in\R\times\R^{n-1}$, for $|\eta|$ sufficiently
small we have
\[
\begin{aligned}
\bigl||\rho_s e_1+\eta|-\rho_s\bigr|
&=
\frac{|2\rho_s\eta_1+|\eta|^2|}
     {|\rho_s e_1+\eta|+\rho_s} \leq
\widetilde{C}_s\bigl(|\eta_1|+\eta_1^2+|\eta'|^2\bigr)
\leq
\widetilde{C}_s\bigl(|\eta_1|+|\eta'|^2\bigr).
\end{aligned}
\]
Therefore, by \eqref{eq:tau-quadratic},
\[
\tau_s(\rho_s e_1+\eta)
\leq
\widetilde{C}_s\bigl(|\eta_1|+|\eta'|^2\bigr)^2
\leq
\widetilde{C}_s\bigl(\eta_1^2+|\eta'|^4\bigr).
\]
On compact subsets of $\R^n\setminus\{0\}$ the quotient $\frac{\tau_s(\rho_s e_1+\eta)}
     {\eta_1^2+|\eta'|^4}$
is bounded. Finally, as $|\eta|\to\infty$,
\[
\tau_s(\rho_s e_1+\eta)
\leq \widetilde{C}_s(\eta_1^2+|\eta'|^4),
\]
because $0<s<1$. Hence
\eqref{eq:anisotropic-symbol} holds.
\end{proof}

\begin{remark}
We give a geometric interpretation of Lemma~\ref{lem:symbol-geometry}.
For $n\geq2$,
\[
\{\xi\in\R^n:\tau_s(\xi)=0\}
=
\rho_s\mathbb S^{n-1},
\]
and \eqref{eq:tau-quadratic} implies that, near the minimizing sphere, $\tau_s(\xi)
\asymp
\operatorname{dist}\bigl(\xi,\rho_s\mathbb S^{n-1}\bigr)^2.$
At the point $\rho_se_1$, writing
\[
\eta=(\eta_1,\eta')\in\R\times\R^{n-1},
\]
the normal displacement $\eta_1$ changes the distance to the sphere at
first order, whereas the tangential displacement $\eta'$ changes it only at
second order. More precisely,
\[
\operatorname{dist}
\bigl(\rho_se_1+\eta,\rho_s\mathbb S^{n-1}\bigr)
\lesssim
|\eta_1|+|\eta'|^2,
\]
and hence $\tau_s(\rho_se_1+\eta)
\lesssim
\eta_1^2+|\eta'|^4.$
Thus the symbol has quadratic growth in the normal direction and quartic
growth in the tangential directions.

This anisotropic estimate is crucial for the large-volume analysis below.
It leads to the choice of longitudinal and transverse scales $l^{-2}\sim r^{-4},$
and, under the volume constraint $lr^{n-1}\sim V$, gives
\[
l\sim V^{\frac{2}{n+1}},
\qquad
r\sim V^{\frac{1}{n+1}},
\]
which yields the scale $V^{-\frac{4}{n+1}}$
for the first shifted eigenvalue.
\end{remark}

\section{Large-Volume Asymptotics of the First Eigenvalue}

In this section, we study the first Dirichlet eigenvalue of
$(-\Delta)^{s+\ln}$ in the large-volume regime. We first establish the
sharp volume scale
for general bounded domains and construct elongated boxes for which the same
order is attained. We then derive the two-sided estimate
for large balls. Finally, comparing these two decay rates shows that, when
$n\geq2$, sufficiently large elongated boxes have strictly smaller first
eigenvalue than balls of the same volume, and hence the Faber--Krahn
inequality fails in the large-volume regime.

\smallskip

We first recall  that  
$$
    \mu_s(\Omega)
:=
\lambda_1^{s+\ln}(\Omega)+\frac1{es}=\inf_{\substack{
u\in\mathcal H_0^{s+\ln}(\Omega)\\
\|u\|_{L^2(\Omega)}=1}}
\int_{\mathbb R^n}
\tau_s(\xi)|\widehat u(\xi)|^2\,d\xi
$$
and $\tau_s(\xi)
:=
|\xi|^{2s}\ln|\xi|^2+\frac1{es}\ge 0$ for all $\xi\in\mathbb R^n. $

\subsection{Sharp Large-Volume Estimates for the First Eigenvalue}

We first derive a uniform lower bound for the shifted first eigenvalue on
large domains. The main point is that the shifted symbol $\tau_s$ vanishes
quadratically on the minimizing sphere $|\xi|=\rho_s$. We use the
Tomas--Stein restriction estimate to control the $L^2$-mass of $\widehat u$
inside a thin neighborhood of this sphere in terms of the volume of
$\operatorname{supp}u$. Choosing the thickness of the shell optimally then
forces a fixed proportion of the Fourier mass to lie away from the minimizing
sphere, where $\tau_s$ is bounded below by a quadratic distance. This yields
the scale $V^{-4/(n+1)}$.

\begin{proposition}
\label{prop:volume-lower}
There exist constants $C_{n,s}^{(1)}>0$ and $V_0>0$ such that every bounded open subset $\Omega\subset\R^n$ with $|\Omega|=V\geq V_0$ satisfies
\[\mu_s(\Omega)\geq C_{n,s}^{(1)}V^{-4/(n+1)}.\]
\end{proposition}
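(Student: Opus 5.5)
To prove the lower bound, I would fix $u\in\mathcal H_0^{s+\ln}(\Omega)$ with $\|u\|_2=1$. Since $\tau_s\ge0$ and $\tau_s$ is continuous, it is enough to treat $u\in C_c^\infty(\Omega)$ and then pass to the closure. The strategy is to split Fourier space into a thin shell around the minimizing sphere,
\[
A_\delta:=\{\xi\in\R^n:\ ||\xi|-\rho_s|<\delta\},
\]
and its complement. On the complement, \eqref{eq:tau-away} of Lemma~\ref{lem:symbol-geometry} gives $\tau_s\ge\widetilde c_s\delta^2$ whenever $0<\delta\le\delta_s$. Therefore
\[
\int_{\R^n}\tau_s|\widehat u|^2\,d\xi\ \ge\ \widetilde c_s\delta^2\Bigl(1-\int_{A_\delta}|\widehat u|^2\,d\xi\Bigr),
\]
and the whole problem reduces to showing that the shell carries at most half of the $L^2$ mass once $\delta$ is comparable to $V^{-2/(n+1)}$.

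To bound the shell mass, I would use the Tomas--Stein restriction theorem in its dual, $L^2$-averaged form, as cited in the introduction. For each $\rho$ with $|\rho-\rho_s|<\delta\le\rho_s/2$, the sphere $\rho\mathbb S^{n-1}$ has radius comparable to $\rho_s$. The Stein--Tomas estimate $\|\widehat u|_{\rho\mathbb S^{n-1}}\|_{L^2(d\sigma)}\le C\|u\|_{L^{p}}$ with $p=\frac{2(n+1)}{n+3}$ then holds with a constant uniform in such $\rho$, by rescaling. Integrating in $\rho$ over the shell gives
\[
\int_{A_\delta}|\widehat u|^2\,d\xi\le C\delta\,\|u\|_{L^p}^2 .
\]
Since $\operatorname{supp}u\subset\Omega$ and $|\Omega|=V$, Hölder's inequality yields $\|u\|_{L^p}\le V^{\frac1p-\frac12}\|u\|_2=V^{\frac{1}{n+1}}$, because $\frac1p-\frac12=\frac{n+3}{2(n+1)}-\frac12=\frac1{n+1}$. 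Combining the two,
\[
\int_{A_\delta}|\widehat u|^2\le C_*\delta V^{\frac2{n+1}}.
\]

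Next I would choose $\delta:=(2C_*)^{-1}V^{-2/(n+1)}$, which makes the shell mass at most $\tfrac12$. Taking $V_0$ large enough guarantees $\delta\le\min\{\delta_s,\rho_s/2\}$. The first display then gives $\mathcal E_{s+\ln}(u,u)+\frac1{es}\ge\frac{\widetilde c_s}{2}\delta^2=C^{(1)}_{n,s}V^{-4/(n+1)}$, and taking the infimum over $u$ proves the proposition. The case $n=1$ also works, with Tomas--Stein replaced by the trivial bound $|\widehat u(\xi)|\le(2\pi)^{-1/2}\|u\|_1\le C V^{1/2}$, which gives the same exponent $V^{-2}$.

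The main obstacle is the restriction step. Specifically, I need to check that the Tomas--Stein constant is uniform over the radii in the shell and that the exponent bookkeeping gives exactly $V^{2/(n+1)}$. Both follow from scaling $\widehat u(\rho\omega)$ to the unit sphere: the Tomas--Stein constant for radius $\rho$ is $\rho^{\frac{n-1}{2}-\frac{n}{p'}}$ times the unit-sphere constant, which is bounded for $\rho\in[\rho_s/2,2\rho_s]$. Everything else is elementary.
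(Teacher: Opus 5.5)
Your proposal is correct and follows essentially the same route as the paper. Both arguments use H\"older to get $\|u\|_{L^{p_*}}\le V^{1/(n+1)}$, apply the rescaled Tomas--Stein estimate (uniform for radii near $\rho_s$) and integrate it in polar coordinates to bound the shell mass by $C\delta V^{2/(n+1)}$, and then choose $\delta\asymp V^{-2/(n+1)}$ and use \eqref{eq:tau-away} on the complement; your separate treatment of $n=1$ is consistent with this, since there $p_*=1$ and the restriction estimate reduces to the trivial bound $|\widehat u|\le(2\pi)^{-1/2}\|u\|_1$.
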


\begin{proof}
Let $u\in\mathcal H_0^{s+\ln}(\Omega)$ satisfy $\|u\|_{L^2(\Omega)}=1$, and set $p_*:=\frac{2(n+1)}{n+3}.$ By H\"older's inequality, we have
\[
\|u\|_{L^{p_*}(\R^n)}
\leq
|\Omega|^{\frac1{p_*}-\frac12}\|u\|_{L^2(\R^n)}
=
V^{1/(n+1)}.
\]
For $0<\delta<\rho_s/2$, define the frequency shell
\[
\mathcal S_\delta
:=
\left\{
\xi\in\R^n:
\bigl||\xi|-\rho_s\bigr|<\delta
\right\}.
\]
By the Tomas--Stein restriction theorem
\cite[Section~2.2, equation~(2.7)]{CueninMerz2021}
(see also \cite[Theorem~3]{Stein1986} and \cite{Tomas1975}), if $p_*:=\frac{2(n+1)}{n+3},$
then, for every $f\in L^{p_*}(\R^n)$,
\begin{equation}\label{eq:TS-unit-sphere}
\left(
\int_{\mathbb S^{n-1}}
|\widehat f(\omega)|^2\,d\omega
\right)^{1/2}
\leq
C_n\|f\|_{L^{p_*}(\R^n)}.
\end{equation}
We apply this estimate to the rescaled function $f_r(x):=u(x/r),\,r>0.$
With our Fourier transform convention, by \eqref{eq:TS-unit-sphere},
\[
\begin{aligned}
\left(
\int_{\mathbb S^{n-1}}
|\widehat u(r\omega)|^2\,d\omega
\right)^{1/2}
&=
r^{-n}
\left(
\int_{\mathbb S^{n-1}}
|\widehat{f_r}(\omega)|^2\,d\omega
\right)^{1/2} \leq
C_n r^{-n}\|f_r\|_{L^{p_*}(\R^n)}.
\end{aligned}
\]
On the other hand, $\|f_r\|_{L^{p_*}(\R^n)}=r^{n/p_*}\|u\|_{L^{p_*}(\R^n)}.$
Consequently,
\[\left(
\int_{\mathbb S^{n-1}}
|\widehat u(r\omega)|^2\,d\omega
\right)^{1/2}
\leq
C_n
r^{-n+n/p_*}
\|u\|_{L^{p_*}(\R^n)}=C_n
r^{-\frac{n(n-1)}{2(n+1)}}
\|u\|_{L^{p_*}(\R^n)}..\]
Thus,
\[\left(
\int_{\mathbb S^{n-1}}
|\widehat u(r\omega)|^2\,d\omega
\right)^{1/2}
\leq
C_{n,s}\|u\|_{L^{p_*}(\R^n)},
\qquad
r\in
\left[\frac{\rho_s}{2},\frac{3\rho_s}{2}\right].\]
Hence, by polar
coordinates,
\[
\int_{\mathcal S_\delta}|\widehat u(\xi)|^2\,d\xi
\leq
C_{n,s}\delta\|u\|_{L^{p_*}(\R^n)}^2
\leq
C_{n,s}\delta V^{2/(n+1)}.
\]
Choose $\delta=c_0V^{-2/(n+1)},$
with $c_0>0$ sufficiently small so that the right-hand side is at most
$1/2$. For $V$ sufficiently large, $\delta\leq\delta_s$, and Plancherel's
identity yields $\int_{\mathcal S_\delta^c}|\widehat u(\xi)|^2\,d\xi\geq\frac12.$
By \eqref{eq:tau-away},
\[
\begin{aligned}
\int_{\R^n}\tau_s(\xi)|\widehat u(\xi)|^2\,d\xi
&\geq
\widetilde{c}_s\delta^2
\int_{\mathcal S_\delta^c}|\widehat u(\xi)|^2\,d\xi\geq
C_{n,s}^{(1)}V^{-4/(n+1)}.
\end{aligned}
\]
Taking the infimum in \eqref{eq:shifted-rayleigh} completes the proof.
\end{proof}

\begin{remark}
For the classical and fractional Dirichlet Laplacians, homogeneity and the
Faber--Krahn inequality give
\[
\inf_{|\Omega|=V}\lambda_1(\Omega)=C_nV^{-2/n},
\qquad
\inf_{|\Omega|=V}\lambda_{1,s}(\Omega)=C_{n,s}V^{-2s/n},
\]
with equality precisely for balls. In contrast, the symbol $|\xi|^{2s}\ln|\xi|^2$
of the fractional-logarithmic Laplacian is nonhomogeneous and attains its
minimum on the sphere $|\xi|=\rho_s$. Accordingly, after shifting the spectral
minimum, the large-volume scale is determined by the low-energy geometry near
this sphere, the exponent $4/(n+1)$ arises from the codimension-one minimizing set
and the Tomas--Stein restriction estimate.
\end{remark}

Recall that, for $l,r>0$ and $n\geq2$,
\[
Q_{l,r}
:=
(-l,l)\times(-r,r)^{n-1}
\subset\R\times\R^{n-1},
\]
and, for $V>0$, define
\begin{equation}\label{def q}
Q_V
:=
Q_{V^{\frac{2}{n+1}},\,V^{\frac{1}{n+1}}}.
\end{equation}

We next construct a family of elongated boxes for which the lower-bound
scale is attained. The test function is localized near the minimizing frequency
$\rho_s e_1$. We choose the longitudinal and transverse scales
$l$ and $r$ so that $l^{-2}\sim r^{-4},\,
lr^{n-1}\sim V,$
which gives $l\sim V^{2/(n+1)},\,
r\sim V^{1/(n+1)}.$

\begin{lemma}\label{prop:box-upper}
Let $n\geq2$ and let $Q_V$ be defined by \eqref{def q}. Then there exist
constants $C_{n,s}^{(2)}>0$ and $V_0>0$ such that
\[
\mu_s(Q_V)
\leq
C_{n,s}^{(2)}V^{-\frac{4}{n+1}}
\]
for every $V\geq V_0$.
\end{lemma}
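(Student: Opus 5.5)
We bound $\mu_s(Q_V)$ from above by testing the shifted Rayleigh quotient \eqref{eq:shifted-rayleigh} with a modulated, anisotropically scaled bump. Fix real-valued $\phi\in C_c^\infty((-1,1))$ and $\psi\in C_c^\infty((-1,1)^{n-1})$ with $\|\phi\|_{L^2(\R)}=\|\psi\|_{L^2(\R^{n-1})}=1$. Set $l=V^{2/(n+1)}$, $r=V^{1/(n+1)}$, and define
\[
u(x)=l^{-1/2}r^{-(n-1)/2}\cos(\rho_s x_1)\,\phi\Bigl(\frac{x_1}{l}\Bigr)\psi\Bigl(\frac{x'}{r}\Bigr).
\]
We use the cosine rather than $e^{i\rho_s x_1}$ so that $u$ is real-valued. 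Then $u\in C_c^\infty(Q_V)\subset\mathcal H_0^{s+\ln}(Q_V)$.

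First we compute the $L^2$ norm. Writing $\cos^2=\tfrac12(1+\cos(2\rho_s x_1))$ gives
\[
\|u\|_2^2=\tfrac12+\tfrac12\int_{\R}\cos(2\rho_s l t)\phi(t)^2\,dt .
\]
By Riemann--Lebesgue (indeed rapid decay, since $\phi^2$ is smooth), this tends to $\tfrac12$ as $l\to\infty$. Hence $\|u\|_2^2\ge\tfrac14$ for $V$ large, and it suffices to show $\int\tau_s|\widehat u|^2\lesssim V^{-4/(n+1)}$.

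Next we write the Fourier transform. With $\Phi=\widehat\phi$ and $\Psi=\widehat\psi$,
\[
\widehat u(\xi)=\tfrac12 l^{1/2}r^{(n-1)/2}\Bigl[\Phi\bigl(l(\xi_1-\rho_s)\bigr)+\Phi\bigl(l(\xi_1+\rho_s)\bigr)\Bigr]\Psi(r\xi').
\]
Since $\tau_s$ is radial, hence even, the reflection $\xi\mapsto-\xi$ reduces both terms to the first one. Using $|a+b|^2\le2|a|^2+2|b|^2$, it then suffices to bound
\[
\int_{\R^n}\tau_s(\rho_se_1+\eta)\,l\,r^{n-1}\,|\Phi(l\eta_1)|^2|\Psi(r\eta')|^2\,d\eta .
\]
By \eqref{eq:anisotropic-symbol}, this is at most $C_s\int(\eta_1^2+|\eta'|^4)\,l\,r^{n-1}|\Phi(l\eta_1)|^2|\Psi(r\eta')|^2\,d\eta$. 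Changing variables $t=l\eta_1$, $y=r\eta'$, the integral equals
\[
C_s\Bigl(l^{-2}\int t^2|\Phi|^2\,dt+r^{-4}\int|y|^4|\Psi|^2\,dy\Bigr).
\]
Both moments are finite because $\Phi$ and $\Psi$ are Schwartz. With the chosen $l$ and $r$, this is $\lesssim V^{-4/(n+1)}$.

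The main obstacle is already settled by Lemma~\ref{lem:symbol-geometry}. The bound \eqref{eq:anisotropic-symbol} holds globally in $\eta$, so no separate tail estimate is needed, and the polynomial weights $t^2$ and $|y|^4$ are integrable against the Schwartz tails. The only points requiring care are keeping the test function real-valued, which would be unnecessary if the form is complex-Hermitian, and handling the cross-term in the norm. Both are dealt with above for $V\ge V_0$ large.
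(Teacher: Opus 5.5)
Your proof is correct and follows the paper's argument: you test with the same anisotropically scaled bump modulated at frequency $\rho_s e_1$ on $Q_{l,r}$, with $l=V^{2/(n+1)}$ and $r=V^{1/(n+1)}$, and use \eqref{eq:anisotropic-symbol} to get the bound $C(l^{-2}+r^{-4})$. The only difference is cosmetic: you use the cosine modulation directly, handling the mirror bump at $-\rho_s e_1$ via evenness of $\tau_s$ and the norm via Riemann--Lebesgue, whereas the paper uses $e^{i\rho_s x_1}$ and then passes to whichever of the real or imaginary parts has the smaller Rayleigh quotient.
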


\begin{proof}
Fix real-valued functions $\phi\in C_c^\infty(-1,1),\,
\psi\in C_c^\infty((-1,1)^{n-1}),$
such that $\|\phi\|_2=\|\psi\|_2=1.$
For $l,r>0$, define
\[
u_{l,r}(x)
:=
l^{-1/2}r^{-(n-1)/2}
e^{i\rho_sx_1}
\phi\left(\frac{x_1}{l}\right)
\psi\left(\frac{x'}{r}\right).
\]
Then $\|u_{l,r}\|_2=1,\,
\operatorname{supp}u_{l,r}\subset Q_{l,r},$
and
\[
\widehat u_{l,r}(\xi)
=
l^{1/2}r^{(n-1)/2}
\widehat\phi\bigl(l(\xi_1-\rho_s)\bigr)
\widehat\psi(r\xi').
\]

Using the shifted Fourier representation and the change of variables $\zeta_1=l(\xi_1-\rho_s),\,
\zeta'=r\xi',$
we obtain
\begin{align*}
\mathcal E_{s+\ln}(u_{l,r},u_{l,r})
+\frac1{es}\|u_{l,r}\|_2^2
&=
\int_{\R^n}
\tau_s\left(
\rho_se_1+
\left(\frac{\zeta_1}{l},\frac{\zeta'}{r}\right)
\right)
|\widehat\phi(\zeta_1)|^2
|\widehat\psi(\zeta')|^2\,d\zeta.
\end{align*}
By \eqref{eq:anisotropic-symbol},
\[
\tau_s\left(
\rho_se_1+
\left(\frac{\zeta_1}{l},\frac{\zeta'}{r}\right)
\right)
\leq
C_{n,s}
\left(
\frac{\zeta_1^2}{l^2}
+
\frac{|\zeta'|^4}{r^4}
\right).
\]
Since $\widehat\phi$ and $\widehat\psi$ are Schwartz functions,
\[
\int_{\R}\zeta_1^2|\widehat\phi(\zeta_1)|^2\,d\zeta_1<\infty,
\qquad
\int_{\R^{n-1}}
|\zeta'|^4|\widehat\psi(\zeta')|^2\,d\zeta'<\infty.
\]
Hence
\[\mathcal E_{s+\ln}(u_{l,r},u_{l,r})
+\frac1{es}\|u_{l,r}\|_2^2
\leq
C_{n,s}\left(l^{-2}+r^{-4}\right).\]

Although $u_{l,r}$ is complex-valued, write $u_{l,r}=a_{l,r}+ib_{l,r},$
where $a_{l,r}$ and $b_{l,r}$ are real-valued. Since the shifted quadratic
form is real and diagonal with respect to real and imaginary parts,
\[
\begin{aligned}
\mathcal E_{s+\ln}(u_{l,r},u_{l,r})
+\frac1{es}\|u_{l,r}\|_2^2
&=
\mathcal E_{s+\ln}(a_{l,r},a_{l,r})
+\frac1{es}\|a_{l,r}\|_2^2\\
&\quad+
\mathcal E_{s+\ln}(b_{l,r},b_{l,r})
+\frac1{es}\|b_{l,r}\|_2^2,
\end{aligned}
\]
while $\|u_{l,r}\|_2^2
=
\|a_{l,r}\|_2^2+\|b_{l,r}\|_2^2.$
Therefore at least one of $a_{l,r}$ and $b_{l,r}$ is nonzero and has
shifted Rayleigh quotient not exceeding that of $u_{l,r}$. Consequently,
\[
\mu_s(Q_{l,r})
\leq
C_{n,s}^{(2)}\left(l^{-2}+r^{-4}\right).
\]

Finally, for $Q_V$ defined in \eqref{def q}, take $l=V^{\frac{2}{n+1}},\,
r=V^{\frac{1}{n+1}}.$
Then $l^{-2}=r^{-4}=V^{-\frac{4}{n+1}},$
and hence
\[
\mu_s(Q_V)
\leq
C_{n,s}^{(2)}V^{-\frac{4}{n+1}}.
\]
This proves the claim.
\end{proof}

 \subsection{Asymptotics of the First Eigenvalue on Large Balls}
We next derive the large-radius lower bound on balls. The estimate follows
from an uncertainty principle near the minimizing sphere
$|\xi|=\rho_s$: localization of $u$ to $B_R$ gives control of
$\nabla_\xi\widehat u$, while the quadratic lower bound for $\tau_s$ near
$|\xi|=\rho_s$ converts this into a spectral gap of order $R^{-2}$.

\begin{proposition}
\label{prop:ball-lower}
There exists $C_{n,s}^{(3)}>0$ such that, for every $R\geq1$ and every
$u\in\mathcal H_0^{s+\ln}(B_R)$ with $\|u\|_2=1$,
\[\int_{\R^n}
\tau_s(\xi)|\widehat u(\xi)|^2\,d\xi
\geq
C_{n,s}^{(3)}R^{-2},\]
where $\tau_s$ is defined in \eqref{eq:shifted}. Consequently, $\mu_s(B_R)
\geq
C_{n,s}^{(3)}R^{-2}.$
\end{proposition}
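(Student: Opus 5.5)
The approach is an uncertainty principle in the radial frequency variable, built on the dilation commutator identity sketched in the introduction.

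\emph{Reduction.} Let $u\in\mathcal H_0^{s+\ln}(B_R)$ with $\|u\|_2=1$, and put $E:=\int\tau_s|\widehat u|^2$. If $E\geq \widetilde c_s\delta_s^2/4$, there is nothing to prove, because $R\ge1$. So assume $E$ is small. By \eqref{eq:tau-away}, the mass of $\widehat u$ outside the shell $\mathcal S:=\{||\xi|-\rho_s|<\delta_s\}$ is at most $E/(\widetilde c_s\delta_s^2)\le 1/4$. By \eqref{eq:tau-quadratic}, together with $\tau_s\geq \widetilde c_s\delta_s^2\gtrsim(|\xi|-\rho_s)^2$ on bounded sets away from the shell,
\[
E\gtrsim \int_{|\xi|\le 2\rho_s+1}(|\xi|-\rho_s)^2|\widehat u|^2\,d\xi .
\]
It therefore suffices to show $\|\chi(T)(T-\rho_s)u\|_2\gtrsim R^{-1}$, where $T=\sqrt{-\Delta}$ and $\chi$ is a smooth cutoff equal to $1$ near the shell and supported in $\{\rho_s/2\le|\xi|\le 2\rho_s\}$.

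\emph{Commutator step.} Let $A=\frac{1}{2}(x\cdot D+D\cdot x)$ be the dilation generator, with $D=-i\nabla$; on the Fourier side $A=-\frac12(\xi\cdot D_\xi+D_\xi\cdot\xi)$. Since $T$ is homogeneous of degree one, $i[T,A]=T$. Set $v:=\chi(T)u$. Because $\chi(T)$ commutes with $T$,
\[
\langle v,Tv\rangle=\langle v, i[T-\rho_s,A]v\rangle=-2\,\mathrm{Im}\,\langle (T-\rho_s)v,Av\rangle .
\]
This gives $\rho_s\|v\|^2/2\le\langle Tv,v\rangle\le 2\|(T-\rho_s)v\|_2\|Av\|_2$. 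Also $\|v\|_2^2\ge 3/4$, since $\chi=1$ on the shell, which carries at least $3/4$ of the mass. Hence it remains to prove $\|Av\|_2\lesssim R$.

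\emph{Bounding $Av$.} Write $Av=\chi(T)Au+[A,\chi(T)]u$. The commutator equals $i\,T\chi'(T)$, which is bounded uniformly in $R$. For the first term, work on the Fourier side: $\widehat{Au}=-(\xi\cdot\nabla_\xi+\tfrac n2)\widehat u$ up to a factor $i$. On $\supp\chi$ we have $|\xi|\le2\rho_s$, so
\[
\|\chi(T)Au\|_2\lesssim \|\nabla_\xi\widehat u\|_2+1=\||x|u\|_2+1\le R+1,
\]
using $\supp u\subset \overline{B_R}$. Hence $\|Av\|\lesssim R$, and therefore $\|(T-\rho_s)v\|\gtrsim R^{-1}$. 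Since $|(|\xi|-\rho_s)\chi(|\xi|)|^2\lesssim (|\xi|-\rho_s)^2\mathbf 1_{|\xi|\le2\rho_s}$, this yields $E\gtrsim R^{-2}$. Taking the infimum gives $\mu_s(B_R)\ge C^{(3)}_{n,s}R^{-2}$.

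\emph{Main obstacle.} The delicate point is justifying the commutator identity for $u$ that lies only in $\mathcal H_0^{s+\ln}(B_R)$ rather than being smooth. I would first prove the bound for $u\in C_c^\infty(B_R)$, where all quantities are finite and $\|xu\|\le R\|u\|$ holds. The result then extends by density, since the form $\int\tau_s|\widehat u|^2$ is continuous in the $\mathcal H^{s+\ln}$ norm. Complex-valued $u$ cause no difficulty, because the identity uses only $\mathrm{Im}$. Note also that we never need $Au\in L^2$ globally: only $\chi(T)Au$, which is handled on the Fourier side with compact frequency support. This is why the cutoff is applied before the commutator.
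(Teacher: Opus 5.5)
Your proposal is correct and follows the paper's proof essentially step by step: the same Fourier cutoff $v=\chi(T)u$, the bound $\|Av\|_2\lesssim\|xu\|_2+1\le R+1$ from Fourier-side differentiation, and the dilation commutator $i[T-\rho_s,A]=T$, which gives $\|(T-\rho_s)v\|_2\gtrsim R^{-1}$ and hence $E\gtrsim R^{-2}$. The differences are only cosmetic. You take $\chi$ supported on the wider annulus $\rho_s/2\le|\xi|\le2\rho_s$, so you combine \eqref{eq:tau-quadratic} with \eqref{eq:tau-away} to get $\tau_s\gtrsim(|\xi|-\rho_s)^2$ there; this tacitly requires $\delta_s<\rho_s/2$, which the paper also arranges. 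You also add an explicit density argument from $C_c^\infty(B_R)$ that the paper omits.
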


\begin{proof}
Let $u\in\mathcal H_0^{s+\ln}(B_R)$ satisfy $\|u\|_2=1$, and set $E
:=
\int_{\R^n}
\tau_s(\xi)|\widehat u(\xi)|^2\,d\xi.$
Since $u=0\,\,a.e.$ in $\mathbb{R}^n\setminus B_R$, we have $xu\in L^2(\R^n)$ and, by Plancherel identity,
\begin{equation}\label{eq:fourier-gradient-ball}
\|\nabla_\xi\widehat u\|_2
=
\|xu\|_2
\leq
R.
\end{equation}

After decreasing $\delta_s$ if necessary, we may assume
$0<\delta_s<\rho_s/2$, where $\delta_s$ is given in Lemma~\ref{lem:symbol-geometry}. Fix $0<\delta_0<\delta_s$ and choose $\chi\in C_c^\infty((0,\infty)),\,0\leq\chi\leq1,$
such that
\[
\chi(r)=1
\quad\text{for }|r-\rho_s|\leq\delta_0,
\qquad
\supp\chi\subset(\rho_s-\delta_s,\rho_s+\delta_s).
\]
We use $\chi(|\xi|)$ as a radial cutoff in Fourier space. By
\eqref{eq:tau-away},
\[
\tau_s(r)\geq c_s\delta_0^2
\qquad
\text{whenever }|r-\rho_s|\geq\delta_0.
\]
Since $0\leq\chi\leq1$ and $1-\chi(r)=0$ for
$|r-\rho_s|\leq\delta_0$, it follows that $(1-\chi(r))^2
\leq C_s\tau_s(r),\,r\ge 0.$

\smallskip

Let $D:=-i\nabla$ and  $T:=\sqrt{-\Delta},$
so that $\widehat{Tu}(\xi)=|\xi|\widehat u(\xi),$
and define
\[
v:=\chi(T)u,
\qquad
\widehat v(\xi)=\chi(|\xi|)\widehat u(\xi).
\]
Therefore
\[
\|u-v\|_2^2
=
\int_{\R^n}
(1-\chi(|\xi|))^2|\widehat u(\xi)|^2\,d\xi
\leq
C_sE.
\]
Fix $\varepsilon_s\in (0,\frac{1}{4C_s})$ sufficiently small. If $E\geq\varepsilon_s$, then,
since $R\geq1$, $E\geq\varepsilon_sR^{-2},$
and there is nothing to prove. We may therefore assume $E<\varepsilon_s$,
and thus
\begin{equation}\label{eq:v-lower-norm}
\|v\|_2
\geq
1-\|u-v\|_2
\geq
\frac12.
\end{equation}

Let $A:=\frac12(x\cdot D+D\cdot x)$
be the generator of the unitary dilation group
\[
(U_t f)(x)
:=
e^{nt/2}f(e^t x),
\qquad t\in\R,
\]
so that $U_t=e^{itA}.$ Since $\widehat v(\xi)=\chi(|\xi|)\widehat u(\xi),$
we have
\[
\nabla_\xi\widehat v
=
\chi(|\xi|)\nabla_\xi\widehat u
+
\chi'(|\xi|)
\frac{\xi}{|\xi|}\widehat u.
\]
Using \eqref{eq:fourier-gradient-ball}, $\|u\|_2=1$, and the fact that
$\chi$ is fixed, we obtain
\[
\|\nabla_\xi\widehat v\|_2
\leq
\|\nabla_\xi\widehat u\|_2
+
\|\chi'\|_\infty\|\widehat u\|_2
\leq
C_s(R+1).
\]
Since $\widehat{Av}(\xi)
=
i\left(\xi\cdot\nabla_\xi+\frac n2\right)\widehat v(\xi),$
the boundedness of $|\xi|$ on $\supp\widehat v$ gives
\begin{equation}\label{eq:Av-bound}
\|Av\|_2
\leq
C_{n,s}\bigl(
\|\nabla_\xi\widehat v\|_2+\|\widehat v\|_2
\bigr)
\leq
C_{n,s}(R+1).
\end{equation}

On the other hand, $T$ acts in Fourier space as multiplication by
$|\xi|$. Hence, using \eqref{eq:v-lower-norm},
\[
\langle Tv,v\rangle
=
\int_{\R^n}|\xi|\,|\widehat v(\xi)|^2\,d\xi
\geq
\frac{\rho_s}{2}\|v\|_2^2
\geq
\frac{\rho_s}{8}.
\]
Since $T$ is homogeneous of degree one, its commutator with the dilation
generator satisfies $i[T,A]=T.$
As the constant $\rho_s$ commutes with $A$, we also have $i[T-\rho_s,A]=T.$
Therefore,
\[
\begin{aligned}
\frac{\rho_s}{8}
&\leq
\langle Tv,v\rangle
=
\langle i[T-\rho_s,A]v,v\rangle\leq
2\|(T-\rho_s)v\|_2\,\|Av\|_2.
\end{aligned}
\]
Combining this with \eqref{eq:Av-bound} yields $\|(T-\rho_s)v\|_2
\geq
C_{n,s}(R+1)^{-1}.$ Finally, since $\supp\widehat v\subset\{|\,|\xi|-\rho_s|<\delta_s\}$,
\eqref{eq:tau-quadratic} gives
\[
\begin{aligned}
E
&\geq
\int_{\R^n}
\tau_s(\xi)|\widehat v(\xi)|^2\,d\xi\geq
c_s
\int_{\R^n}
\bigl(|\xi|-\rho_s\bigr)^2
|\widehat v(\xi)|^2\,d\xi\\
&=
c_s\|(T-\rho_s)v\|_2^2\geq
C_{n,s}^{(3)}(R+1)^{-2}
\geq
C_{n,s}^{(3)}R^{-2},
\end{aligned}
\]
after changing the constant. 
Taking the infimum over all admissible $u$ gives the desired result.
\end{proof}

\begin{remark}
The proof of Proposition~\ref{prop:ball-lower} can be viewed as a radial
uncertainty principle around the minimizing sphere $\Sigma_s=\rho_s\mathbb S^{n-1}.$
The choice
\[
A:=\frac12(x\cdot D+D\cdot x)
\]
is natural because, in Fourier variables, $\widehat A=i\left(\xi\cdot\nabla_\xi+\frac n2\right),$
so that $A$ generates radial dilations. Since $T-\rho_s$ has symbol
$|\xi|-\rho_s$, $\xi\cdot\nabla_\xi\bigl(|\xi|-\rho_s\bigr)=|\xi|,$
and hence $i[T-\rho_s,A]=T.$
Thus, on a fixed annulus around $\Sigma_s$, the commutator is strictly
positive and yields
\[
\|(T-\rho_s)v\|_2\gtrsim R^{-1}.
\]
Since $\tau_s(\xi)\asymp
\bigl||\xi|-\rho_s\bigr|^2$
near $\Sigma_s$, this gives the lower bound of order $R^{-2}$.
\end{remark}

We next derive a matching upper bound on balls by testing the shifted
Rayleigh quotient with functions whose Fourier mass is concentrated near
the minimizing sphere $|\xi|=\rho_s$.

\begin{lemma}
\label{prop:ball-upper}
There exists $C_{n,s}^{(4)}>0$ such that, for every $R\geq1$,
\[
\mu_s(B_R)\leq C_{n,s}^{(4)}R^{-2}.
\]
\end{lemma}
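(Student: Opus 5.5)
The plan is to mimic the proof of Lemma~\ref{prop:box-upper}, but with an isotropic localization at scale $R$. Fix a real-valued $\phi\in C_c^\infty(B_1)$ with $\|\phi\|_2=1$. For $R\geq1$, define
\[
u_R(x):=R^{-n/2}e^{i\rho_s x_1}\phi\left(\frac{x}{R}\right).
\]
Then $\supp u_R\subset B_R$, so $u_R\in\mathcal H_0^{s+\ln}(B_R)$ (its real and imaginary parts are in $C_c^\infty(B_R)$), and $\|u_R\|_2=1$. Its Fourier transform is
\[
\widehat u_R(\xi)=R^{n/2}\widehat\phi\bigl(R(\xi-\rho_se_1)\bigr).
\]
We may assume the ball is centered at the origin, since $\mu_s$ is translation invariant.

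Changing variables $\zeta=R(\xi-\rho_se_1)$ gives
\[
\int_{\R^n}\tau_s(\xi)|\widehat u_R(\xi)|^2\,d\xi
=\int_{\R^n}\tau_s\Bigl(\rho_se_1+\frac{\zeta}{R}\Bigr)|\widehat\phi(\zeta)|^2\,d\zeta .
\]
We bound the symbol using \eqref{eq:anisotropic-symbol}:
\[
\tau_s\Bigl(\rho_se_1+\frac{\zeta}{R}\Bigr)\leq C_s\Bigl(\frac{\zeta_1^2}{R^2}+\frac{|\zeta'|^4}{R^4}\Bigr)\leq C_s R^{-2}\bigl(\zeta_1^2+|\zeta'|^4\bigr),
\]
where the last step uses $R\geq1$. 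Since $\widehat\phi$ is a Schwartz function, $\int(\zeta_1^2+|\zeta'|^4)|\widehat\phi|^2\,d\zeta<\infty$. Hence the shifted Rayleigh quotient of $u_R$ is at most $CR^{-2}$.

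The function $u_R$ is complex-valued, whereas the variational problem is posed over real functions. We handle this exactly as in Lemma~\ref{prop:box-upper}. Write $u_R=a_R+ib_R$. The shifted form, with the real even symbol $\tau_s$, splits as the sum of the shifted forms of $a_R$ and $b_R$, and $\|u_R\|_2^2=\|a_R\|_2^2+\|b_R\|_2^2$. Therefore one of $a_R$, $b_R$ is nonzero and has Rayleigh quotient at most $CR^{-2}$. This gives $\mu_s(B_R)\leq C_{n,s}^{(4)}R^{-2}$.

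The only step that needs care is the uniform symbol bound, that is, controlling the large-$|\zeta|$ region. It is already covered by the global statement \eqref{eq:anisotropic-symbol}, combined with the moment bounds on $\widehat\phi$, so I expect no genuine obstacle.
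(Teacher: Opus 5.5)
Your proposal is correct and follows the paper's proof essentially step for step. You use the same modulated, isotropically rescaled test function $R^{-n/2}e^{i\rho_s x_1}\phi(x/R)$, the same change of variables combined with the global bound \eqref{eq:anisotropic-symbol} and $R\geq1$, and the same passage to the real or imaginary part as in Lemma~\ref{prop:box-upper}.
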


\begin{proof}
Choose a real-valued function $\phi\in C_c^\infty(B_1)$ with
$\|\phi\|_2=1$, and set
\[
u_R(x)
:=
R^{-n/2}e^{i\rho_sx_1}\phi\left(\frac{x}{R}\right).
\]
Then $\supp u_R\subset B_R$, $\|u_R\|_2=1$, and $\widehat u_R(\xi)
=
R^{n/2}
\widehat\phi\bigl(R(\xi-\rho_se_1)\bigr).$
Hence, using the shifted Fourier representation and the change of variables
$\zeta=R(\xi-\rho_se_1)$,
\[
\begin{aligned}
\mathcal E_{s+\ln}(u_R,u_R)+\frac1{es}
&=
\int_{\R^n}
\tau_s\left(\rho_se_1+\frac{\zeta}{R}\right)
|\widehat\phi(\zeta)|^2\,d\zeta\\
&\leq
C_s\int_{\R^n}
\left(
\frac{\zeta_1^2}{R^2}
+
\frac{|\zeta'|^4}{R^4}
\right)
|\widehat\phi(\zeta)|^2\,d\zeta\leq
C_{n,s}^{(4)}R^{-2},
\end{aligned}
\]
where we used \eqref{eq:anisotropic-symbol} and $R\geq1$ in the last
inequality. By passing to the real or imaginary part as in
Proposition~\ref{prop:box-upper}, this yields $\mu_s(B_R)\leq C_{n,s}^{(4)}R^{-2}.$
\end{proof}

\subsection{Failure of the Faber--Krahn Inequality in the Large-Volume Regime}

We now combine the large-volume lower bound for balls with the matching upper
bound on the elongated boxes $Q_V$ to show that the ball ceases to minimize the
first Dirichlet eigenvalue when the volume is sufficiently large.

\begin{proof}[\textbf{Proof of Theorem~\ref{teo dom-2}.}]
By Proposition~\ref{prop:volume-lower},
there exist $c_{n,s}>0$ and $V_0>0$ such that
\[
\mu_s(\Omega)
\geq
c_{n,s}|\Omega|^{-\frac{4}{n+1}}
\]
for every bounded open set $\Omega\subset\R^n$ with $|\Omega|\geq V_0$.
Since
\[
\mu_s(\Omega)
=
\lambda_1^{s+\ln}(\Omega)+\frac1{es},
\]
this proves $(i)$. We now prove $(ii)$. By Lemma~\ref{prop:box-upper}, there exist
$C_{n,s}>0$ and $V_2>0$ such that
\begin{equation}\label{eq:box-large-upper}
\mu_s(Q_V)
\leq
C_{n,s}V^{-\frac{4}{n+1}}
\qquad
\text{for }V\geq V_2.
\end{equation}
Since
\[
|Q_V|
=
2^n
V^{\frac{2}{n+1}}
\left(V^{\frac{1}{n+1}}\right)^{n-1}
=
2^nV,
\]
the radius $R_V$ of $B_V=B_{R_V}$ satisfies
\[
\omega_nR_V^n=2^nV,
\qquad
R_V
=
\left(\frac{2^nV}{\omega_n}\right)^{1/n}.
\]
Hence Proposition~\ref{prop:ball-lower} yields
\begin{equation}\label{eq:ball-large-lower}
\mu_s(B_V)
\geq
c_{n,s}R_V^{-2}
=
c_{n,s}'
V^{-\frac2n}
\end{equation}
for all sufficiently large $V$. Since $n\geq2$, $\frac{4}{n+1}>\frac2n,$
and therefore
\[
V^{-\frac{4}{n+1}}
=
o\left(V^{-\frac2n}\right)
\qquad
\text{as }V\to\infty.
\]
Combining \eqref{eq:box-large-upper} and
\eqref{eq:ball-large-lower}, we may choose
$V_1=V_1(n,s)\geq\max\{V_0,V_2\}$ such that
\[
\mu_s(Q_V)
<
\mu_s(B_V)
\qquad
\text{for every }V\geq V_1.
\]
Using again $\mu_s(\Omega)
=
\lambda_1^{s+\ln}(\Omega)+\frac1{es},$
we obtain
\[
\lambda_1^{s+\ln}(Q_V)
<
\lambda_1^{s+\ln}(B_V),
\]
which completes the proof.
\end{proof}

\section{Faber--Krahn Inequality in the Small-Volume Regime}

In this section, we prove the Faber--Krahn inequality for the
fractional-logarithmic Laplacian on domains of sufficiently small volume
and characterize the equality case. 

We first record the scaling relation
for the first Dirichlet eigenvalue on balls. Under the dilation
$B_R=RB_1$, the logarithmic factor produces an additional multiple of the
fractional energy, which is encoded by the auxiliary function $F$ below.
We then combine this scaling formula with rearrangement estimates to prove
the small-volume inequality and its rigidity statement.

\begin{lemma}\label{lem:scaling}
Let $n\geq1$, $0<s<1$, and let $B_R\subset\R^n$ be the ball of radius
$R>0$. Define
\[F(L):=
\inf_{\substack{
w\in\mathcal H_0^{s+\ln}(B_1)\\
\|w\|_2=1
}}
\left\{
\int_{\R^n}|\xi|^{2s}\ln|\xi|^2\,|\widehat w(\xi)|^2\,d\xi
+
L\int_{\R^n}|\xi|^{2s}|\widehat w(\xi)|^2\,d\xi
\right\},
\qquad L\in\R.\]
Then, for every $R>0$,
\[\lambda_1^{s+\ln}(B_R)
=
R^{-2s}
F\left(2\ln\frac1R\right).\]
\end{lemma}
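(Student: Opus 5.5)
The plan is a direct change of variables in the Rayleigh quotient. For $w\in\mathcal H_0^{s+\ln}(B_1)$ with $\|w\|_2=1$, define the $L^2$-normalized dilation
\[
u(x):=R^{-n/2}w(x/R),
\]
so that $u\in\mathcal H_0^{s+\ln}(B_R)$ and $\|u\|_2=1$. We will check that this map is a bijection between the two admissible classes; the inverse is $w(y)=R^{n/2}u(Ry)$. Density is preserved because dilation maps $C_c^\infty(B_1)$ onto $C_c^\infty(B_R)$. Continuity in the $\mathcal H^{s+\ln}$ norm follows from the Fourier computation below, since that norm is controlled by a weight of the form $1+|\xi|^{2s}|\ln|\xi|^2|$. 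Under dilation this weight changes only by a factor bounded in terms of $R$.

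With the unitary Fourier transform we have
\[
\widehat u(\xi)=R^{n/2}\widehat w(R\xi).
\]
Substituting $\zeta=R\xi$ then gives
\[
\int_{\R^n}|\xi|^{2s}\ln|\xi|^2\,|\widehat u(\xi)|^2\,d\xi
=\int_{\R^n}R^{-2s}|\zeta|^{2s}\bigl(\ln|\zeta|^2-2\ln R\bigr)|\widehat w(\zeta)|^2\,d\zeta .
\]
Hence
\[
\mathcal E_{s+\ln}(u,u)
=R^{-2s}\Bigl[\int_{\R^n}|\zeta|^{2s}\ln|\zeta|^2|\widehat w|^2\,d\zeta+2\ln\tfrac1R\int_{\R^n}|\zeta|^{2s}|\widehat w|^2\,d\zeta\Bigr].
\]
Both integrals are finite for $w\in\mathcal H_0^{s+\ln}(B_1)$. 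The first is finite by the definition of the space. The second is finite because $|\zeta|^{2s}\le C(1+|\zeta|^{2s}|\ln|\zeta|^2|)$.

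Taking the infimum over $w$ on the right, and equivalently over $u$ on the left because the map is bijective, yields $\lambda_1^{s+\ln}(B_R)=R^{-2s}F(2\ln\frac1R)$. This uses the identification of $\mathcal E_{s+\ln}(u,u)$ with the Fourier integral $\int|\xi|^{2s}\ln|\xi|^2|\widehat u|^2\,d\xi$, already used in \eqref{eq:shifted-rayleigh}.

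The only mildly delicate point is the functional-analytic bookkeeping: the dilation must be an isomorphism of $\mathcal H_0^{s+\ln}(B_1)$ onto $\mathcal H_0^{s+\ln}(B_R)$, and the Fourier representation of the form must hold on the whole closure rather than only on $C_c^\infty$. Both follow from the weight comparison above by approximation. The algebraic identity itself is routine.
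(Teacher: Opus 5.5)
Your proposal is correct and follows essentially the same route as the paper: you use an $L^2$-preserving dilation (written as the inverse of the paper's map $w(y)=R^{n/2}u(Ry)$), the identity $\widehat u(\xi)=R^{n/2}\widehat w(R\xi)$, and the substitution $\zeta=R\xi$ to split off $2\ln\frac1R\int|\zeta|^{2s}|\widehat w|^2\,d\zeta$, and then take infima over the bijectively corresponding admissible classes. Your weight comparison, which justifies that the dilation is an isomorphism of $\mathcal H_0^{s+\ln}(B_1)$ onto $\mathcal H_0^{s+\ln}(B_R)$ and that the second integral is finite, supplies detail the paper only asserts.
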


\begin{proof}
For $u\in\mathcal H_0^{s+\ln}(B_R)$, define $w(y):=R^{n/2}u(Ry).$
The map $u\mapsto w$ is a bijection from
$\mathcal H_0^{s+\ln}(B_R)$ onto
$\mathcal H_0^{s+\ln}(B_1)$ and preserves the $L^2$-norm $\|w\|_2=\|u\|_2.$
Moreover, $\widehat u(\xi)
=
R^{n/2}\widehat w(R\xi).$
Hence, setting $\eta=R\xi$,
\begin{align*}
\int_{\R^n}
|\xi|^{2s}\ln|\xi|^2\,|\widehat u(\xi)|^2\,d\xi=
R^{-2s}
\int_{\R^n}
|\eta|^{2s}
\left(
\ln|\eta|^2+2\ln\frac1R
\right)
|\widehat w(\eta)|^2\,d\eta.
\end{align*}
Therefore,
\[
\frac{\cE_{s+\ln}(u,u)}{\|u\|_2^2}
=
R^{-2s}
\frac{
\displaystyle
\int_{\R^n}|\eta|^{2s}\ln|\eta|^2|\widehat w(\eta)|^2\,d\eta
+
2\ln\frac1R
\int_{\R^n}|\eta|^{2s}|\widehat w(\eta)|^2\,d\eta
}{
\|w\|_2^2
}.
\]
Taking the infimum over all nonzero
$u\in\mathcal H_0^{s+\ln}(B_R)$, equivalently over all nonzero
$w\in\mathcal H_0^{s+\ln}(B_1)$, yields
\[
\lambda_1^{s+\ln}(B_R)
=
R^{-2s}
F\left(2\ln\frac1R\right),
\]
which completes the proof.
\end{proof}

We are now ready to prove the small-volume Faber--Krahn inequality and characterize the equality case.

\begin{proof}[\textbf{Proof of Theorem~\ref{teo dom-1}.}]
Recall from Section~\ref{preli} that $K_{s+\ln}=K_{s+\ln}^+-K_{s+\ln}^-$ and set
\[
M_s:=\|K_{s+\ln}^-\|_{L^\infty(\R^n)}<\infty,
\qquad
\kappa_s:=\|K_{s+\ln}^-\|_{L^1(\R^n)}<\infty.
\]
Let $V=|\Omega|$ and let $B_{R_V}$ be the ball satisfying $|B_{R_V}|=V$. We shall choose $R_V>0$ sufficiently small, depending only on $n$ and $s$. Take $u\in\mathcal H_0^{s+\ln}(\Omega)$ real-valued with $\|u\|_2=1$.

\smallskip

\textbf{Step 1.} We first record a rearrangement estimate for nonnegative functions. Let $f\in\mathcal H^{s+\ln}(\R^n)$ be nonnegative and  nontrivial, assume $v:=|\{f>0\}|\leq V$. If $2R_V\leq r_{n,s}$, then the ball $B_{R_v}$ of volume $v$ has diameter at most $r_{n,s}$. Recall that
\[
\mathcal I_\pm(f)
=
\frac12
\iint_{\R^n\times\R^n}
(f(x)-f(y))^2K_{s+\ln}^\pm(|x-y|)\,dx\,dy.
\]
Since $K_{s+\ln}^+$ is singular at the origin, for $M>0$ define
\[
K_{s+\ln,M}^+(r):=\min\{K_{s+\ln}^+(r),M\}.
\]
Then $K_{s+\ln,M}^+$ is nonnegative, radial, nonincreasing, and belongs to $L^1(\R^n)$. Set
\[
\mathcal I_{+,M}(f)
:=
\frac12
\iint_{\R^n\times\R^n}
(f(x)-f(y))^2K_{s+\ln,M}^+(|x-y|)\,dx\,dy.
\]
For $f\geq0$, expanding the square gives
\[
\mathcal I_{+,M}(f)
=
\|f\|_2^2\int_{\R^n}K_{s+\ln,M}^+(|z|)\,dz
-
\iint_{\R^n\times\R^n}
f(x)f(y)K_{s+\ln,M}^+(|x-y|)\,dx\,dy,
\]
where the interaction term is finite by Young's inequality. Since
$\|f^\ast\|_2=\|f\|_2$, the Riesz rearrangement inequality
\cite[Theorem~3.9]{LiebLoss2001} yields
\[
\iint
f(x)f(y)K_{s+\ln,M}^+(|x-y|)\,dx\,dy
\leq
\iint
f^\ast(x)f^\ast(y)K_{s+\ln,M}^+(|x-y|)\,dx\,dy,
\]
because $K_{s+\ln,M}^+$ is symmetric decreasing. Hence $\mathcal I_{+,M}(f^\ast)\leq\mathcal I_{+,M}(f).$
Finally, since
\[
K_{s+\ln,M}^+(r)\uparrow K_{s+\ln}^+(r)
\qquad\text{as }M\to\infty,
\]
the monotone convergence theorem gives $\mathcal I_+(f^\ast)\leq\mathcal I_+(f).$ Note that $$\mathcal I_-(f)
\leq
\|f\|_2^2\int_{\R^n}K_{s+\ln}^-(|z|)\,dz.$$
On the other hand, $\operatorname{supp}f^\ast\subset B_{R_v}$ and $\operatorname{diam}(B_{R_v})\leq r_{n,s}$, hence $K_{s+\ln}^-(|x-y|)=0$ on $\operatorname{supp}f^\ast\times\operatorname{supp}f^\ast$, and therefore $\mathcal I_-(f^\ast)=\|f\|_2^2\int_{\R^n}K_{s+\ln}^-(|z|)\,dz.$
Consequently,
\begin{equation}
\label{eq:nonnegative-rearrangement-estimate}
\mathcal E_{s+\ln}(f,f)
\geq
\mathcal E_{s+\ln}(f^\ast,f^\ast)
\geq
\lambda_1^{s+\ln}(B_{R_v})\|f\|_2^2.
\end{equation}

\textbf{Step 2.} We next quantify the dependence of the ball eigenvalue on its volume. By Lemma \ref{lem:scaling}, we obtain
\[
\lambda_1^{s+\ln}(B_R)
=
R^{-2s}F(L),
\qquad
L:=2\ln\frac1R.
\]
Since $r^{2s}\ln r^2\geq-1/(es)$ for every $r\ge 0,$
we have $F(L)\geq L\Lambda_s-\frac1{es}.$
Hence, if $L\geq2/(es\Lambda_s)$, then $F(L)\geq\frac{\Lambda_s}{2}L.$
Moreover, for every $\delta\geq0$, $F(L+\delta)\geq F(L)+\delta\Lambda_s.$

\smallskip

Let $0<a\leq1$. The ball of volume $aV$ has radius ${R_V}a^{1/n}$. Set
\[
p:=\frac{2s}{n},
\qquad
\delta:=\frac{2}{n}\ln\frac1a\geq0,\qquad \Lambda_{n,s}:=\frac{p\Lambda_s}{2},
\]
the function $x\mapsto x^{-p}$ is convex on $(0,\infty)$, we obtain $a^{-p}\geq 1+p(1-a),$ and hence
\begin{equation}\label{eq:ball-eigenvalue-volume-gap}
\begin{aligned}
\lambda_1^{s+\ln}(B_{R_{aV}})-\lambda_1^{s+\ln}(B_{R_V})
&=R_V^{-2s}\bigl[a^{-p}F(L+\delta)-F(L)\bigr]\\&\geq R_V^{-2s}\bigl[(a^{-p}-1)F(L)+a^{-p}\delta\Lambda_s\bigr]\\
&\geq\frac{\Lambda_s}{2}R_V^{-2s}L(a^{-p}-1)
\geq \Lambda_{n,s}R_V^{-2s}L(1-a).
\end{aligned}
\end{equation}

Now write $u=f-g,\,f:=u_+,\,g:=u_-.$
The truncation inequality $|u_\pm(x)-u_\pm(y)|\leq|u(x)-u(y)|$ shows that $f,g\in\mathcal H^{s+\ln}(\R^n)$. Set
\[
\alpha:=\|f\|_2^2,\qquad
\beta:=\|g\|_2^2,\qquad
|\{f>0\}|=aV,\qquad
|\{g>0\}|=bV.
\]
Then $\alpha+\beta=1$ and $a+b\leq1.$
If $\alpha\beta=0$, then $u$ has one sign and \eqref{eq:nonnegative-rearrangement-estimate}, together with $a\leq1$ and \eqref{eq:ball-eigenvalue-volume-gap}, immediately yields $\mathcal E_{s+\ln}(u,u)\geq\lambda_1^{s+\ln}(B_{R_V}).$

\smallskip

Assume henceforth that $\alpha,\beta>0$. By \eqref{eq:nonnegative-rearrangement-estimate} and \eqref{eq:ball-eigenvalue-volume-gap},
\begin{equation}
\label{eq:positive-negative-energy-lower-bound}
\begin{aligned}
\mathcal E_{s+\ln}(f,f)+\mathcal E_{s+\ln}(g,g)
\geq\lambda_1^{s+\ln}(B_{R_V})
+\Lambda_{n,s}R_V^{-2s}L\bigl[\alpha(1-a)+\beta(1-b)\bigr].
\end{aligned}
\end{equation}
Since $fg=0$ a.e.,
\[
\mathcal E_{s+\ln}(f,g)
=
-\iint_{\R^n\times\R^n}
f(x)g(y)K_{s+\ln}(|x-y|)\,dx\,dy,
\]
and hence
\[
\mathcal E_{s+\ln}(f,g)
\leq
\iint f(x)g(y)K_{s+\ln}^-(|x-y|)\,dx\,dy
\leq
M_s\|f\|_1\|g\|_1.
\]
By Cauchy--Schwarz, $\|f\|_1\leq(aV\alpha)^{1/2},\,\|g\|_1\leq(bV\beta)^{1/2},$
so
\begin{equation}
\label{eq:cross-interaction-upper-bound}
\mathcal E_{s+\ln}(f,g)
\leq
M_sV\sqrt{ab\alpha\beta}.
\end{equation}
Since $a+b\leq1$,
\begin{equation}
\label{eq:mass-volume-defect-lower-bound}
\alpha(1-a)+\beta(1-b)
\geq
\alpha b+\beta a
\geq
2\sqrt{ab\alpha\beta}.
\end{equation}
Combining \eqref{eq:positive-negative-energy-lower-bound}--\eqref{eq:mass-volume-defect-lower-bound}, we obtain
\begin{equation}
\label{eq:signed-function-energy-lower-bound}
\begin{aligned}
\mathcal E_{s+\ln}(u,u)
&=
\mathcal E_{s+\ln}(f,f)+\mathcal E_{s+\ln}(g,g)
-2\mathcal E_{s+\ln}(f,g)\\
&\geq
\lambda_1^{s+\ln}(B_{R_V})
+
2\left(
\Lambda_{n,s}R_V^{-2s}L-M_sV
\right)\sqrt{ab\alpha\beta}.
\end{aligned}
\end{equation}
Since $V=|B_1|R_V^n$ and $L=2\ln(1/R_V)$, $R_V^{-2s}L\rightarrow\infty,\,
V\rightarrow0$ as $R_V\to0.$
We may therefore choose $R_0=R_0(n,s)>0$ such that
\[ 2R_0< r_{n,s},\quad
2\ln\frac1{R_0}\geq\frac{2}{es\Lambda_s},
\quad
\Lambda_{n,s} R_V^{-2s}2\ln\frac{1}{R_V}
>
M_s|B_1|R_V^n
\quad\text{for }0<R_V\leq R_0.\]
Setting $V_0:=|B_1|R_0^n$, \eqref{eq:signed-function-energy-lower-bound} gives $\mathcal E_{s+\ln}(u,u)
\geq
\lambda_1^{s+\ln}(B_{R_V})$
whenever $V\leq V_0$. Taking the infimum over all $u\in\mathcal H_0^{s+\ln}(\Omega)$ with $\|u\|_2=1$ yields $\lambda_1^{s+\ln}(\Omega)
\geq
\lambda_1^{s+\ln}(B_{R_V}).$

\smallskip

\textbf{Step 3.} It remains to prove the rigidity statement. Suppose that $\lambda_1^{s+\ln}(\Omega)
=
\lambda_1^{s+\ln}(B_{R_V})$ with $|\Omega|=|B_{R_V}|=V$
and let $u\in\mathcal H_0^{s+\ln}(\Omega)$ be a real-valued  first eigenfunction with $\|u\|_2=1$. With the notation $u=f-g,\, f=u_+,\, g=u_-,$
the preceding estimate gives $0
\geq
2\bigl(\Lambda_{n,s}R_V^{-2s}L-M_sV\bigr)
\sqrt{ab\alpha\beta}.$
Hence $ab\alpha\beta=0$, and therefore $u$ has a constant sign. Replacing $u$ by $-u$ if necessary, we may assume $u\geq0$. Writing $|\{u>0\}|=aV$, \eqref{eq:nonnegative-rearrangement-estimate} and \eqref{eq:ball-eigenvalue-volume-gap} yield
\[
\lambda_1^{s+\ln}(B_{R_V})
=
\mathcal E_{s+\ln}(u,u)
\geq
\lambda_1^{s+\ln}(B_{R_{aV}})
\geq
\lambda_1^{s+\ln}(B_{R_V})
+
\Lambda_{n,s}R_V^{-2s}L(1-a),
\]
and hence $a=1$. Thus $u>0$ a.e. in $\Omega.$

\smallskip

Let $u^\ast$ be the symmetric decreasing rearrangement of $u$. Since
$|\{u>0\}|=V$, equimeasurability and the radial monotonicity of $u^\ast$
imply that $u^\ast=0$ a.e. in $B_{R_V}^c.$ Moreover, $\|u^\ast\|_2=\|u\|_2=1.$
Hence $u^\ast$ is an admissible test function for
$\lambda_1^{s+\ln}(B_{R_V})$, and the rearrangement argument above yields
\[
\mathcal E_{s+\ln}(u,u)
\geq
\mathcal E_{s+\ln}(u^\ast,u^\ast)
\geq
\lambda_1^{s+\ln}(B_{R_V}).
\]
Thus, $\mathcal E_{s+\ln}(u,u)
=
\mathcal E_{s+\ln}(u^\ast,u^\ast).$
Since
\[
\mathcal E_{s+\ln}(v,v)
=
\mathcal I_+(v)-\mathcal I_-(v),
\qquad
\mathcal I_+(u)\geq\mathcal I_+(u^\ast),
\qquad
\mathcal I_-(u)\leq\mathcal I_-(u^\ast),
\]
we obtain that $\mathcal I_+(u)=\mathcal I_+(u^\ast)$ and $\mathcal I_-(u)=\mathcal I_-(u^\ast).$
Since $2R_V<r_{n,s}$, 
\[
\mathcal I_-(u^\ast)
=
\|u^\ast\|_2^2\int_{\R^n}K_{s+\ln}^-(|z|)\,dz
=
\int_{\R^n}K_{s+\ln}^-(|z|)\,dz.
\]
Consequently,
\begin{equation}
\label{eq:rigidity-negative-interaction-zero}
\iint_{\R^n\times\R^n}
u(x)u(y)K_{s+\ln}^-(|x-y|)\,dx\,dy
=
0.
\end{equation}

We claim that $\operatorname{diam}(\Omega)\leq r_{n,s}$. Otherwise there exist $x_0,y_0\in\Omega$ with $|x_0-y_0|>r_{n,s}$. Since $\Omega$ is open, one can choose $\varepsilon>0$ such that
\[
B_\varepsilon(x_0),\,B_\varepsilon(y_0)\subset\Omega,
\qquad
|x-y|>r_{n,s}
\quad
\text{for all }(x,y)\in
B_\varepsilon(x_0)\times B_\varepsilon(y_0).
\]
Since $u>0$ a.e. in $\Omega$ and $K_{s+\ln}^-(r)>0$ for $r>r_{n,s}$, we obtain
\[
\begin{aligned}
&\iint_{\R^n\times\R^n}
u(x)u(y)K_{s+\ln}^-(|x-y|)\,dx\,dy\geq
\iint_{B_\varepsilon(x_0)\times B_\varepsilon(y_0)}
u(x)u(y)K_{s+\ln}^-(|x-y|)\,dx\,dy
>0,
\end{aligned}
\]
contradicting \eqref{eq:rigidity-negative-interaction-zero}. Hence $\operatorname{diam}(\Omega)\leq r_{n,s}.$

\smallskip

It remains to identify the equality case in the positive-kernel rearrangement. By the Riesz rearrangement inequality, $\mathcal I_{+,M}(u^\ast)
\leq
\mathcal I_{+,M}(u)$ for every $M>0.$
Moreover, if $M_2>M_1$, then
\[
K_{s+\ln,M_2}^+-K_{s+\ln,M_1}^+
\]
is nonnegative, radial, and nonincreasing. Applying the Riesz rearrangement inequality to this kernel shows that
\[
0\leq
\mathcal I_{+,M_1}(u)-\mathcal I_{+,M_1}(u^\ast)
\leq
\mathcal I_{+,M_2}(u)-\mathcal I_{+,M_2}(u^\ast).
\]
Since $\mathcal I_{+,M}(u)\uparrow\mathcal I_+(u),\,\mathcal I_{+,M}(u^\ast)\uparrow\mathcal I_+(u^\ast),$
and $\mathcal I_+(u)=\mathcal I_+(u^\ast)$, it follows that
\[
\mathcal I_{+,M}(u)
=
\mathcal I_{+,M}(u^\ast)
\qquad\text{for every }M>0.
\]
Using the expansion of $\mathcal I_{+,M}$ and $\|u\|_2=\|u^\ast\|_2$, we therefore obtain
\[
\iint_{\R^n\times\R^n}
u(x)u(y)K_{s+\ln,M}^+(|x-y|)\,dx\,dy
=
\iint_{\R^n\times\R^n}
u^\ast(x)u^\ast(y)K_{s+\ln,M}^+(|x-y|)\,dx\,dy
\]
for every $M>0$. Now set
\[
H_0(r)
:=
\int_0^\infty e^{-M}K_{s+\ln,M}^+(r)\,dM.
\]
Since $H_0(r)
=
1-e^{-K_{s+\ln}^+(r)}$ for $0<r<r_{n,s},$
and $K_{s+\ln}^+$ is strictly decreasing on $(0,r_{n,s})$, the function
$H_0$ is bounded, nonnegative, supported in $[0,r_{n,s}]$, and strictly
decreasing on $(0,r_{n,s})$. Integrating the preceding identity with
respect to $e^{-M}\,dM$ gives
\[
\iint_{\R^n\times\R^n}
u(x)u(y)H_0(|x-y|)\,dx\,dy
=
\iint_{\R^n\times\R^n}
u^\ast(x)u^\ast(y)H_0(|x-y|)\,dx\,dy.
\]

Fix $c>0$ and define
\[
H(r)
:=
\begin{cases}
H_0(r)+c,&0\leq r\leq r_{n,s},\\
ce^{-(r-r_{n,s})},&r>r_{n,s}.
\end{cases}
\]
Then $H\in L^1(\R^n)$ is nonnegative, radial, and strictly decreasing.
Since $\operatorname{diam}(\Omega)\leq r_{n,s}$ and
$2R_V<r_{n,s}$, all distances between points in the essential supports
of $u$ and $u^\ast$ are at most $r_{n,s}$. Moreover,
$\int_{\R^n}u\,dx=\int_{\R^n}u^\ast\,dx$ by equimeasurability. Hence
\[
\iint_{\R^n\times\R^n}
u(x)u(y)H(|x-y|)\,dx\,dy
=
\iint_{\R^n\times\R^n}
u^\ast(x)u^\ast(y)H(|x-y|)\,dx\,dy.
\]
The strict equality case in the Riesz rearrangement inequality
\cite[Theorem~3.9]{LiebLoss2001} therefore yields some $x_0\in\R^n$ such that
\[
u(x)=u^\ast(x-x_0)
\qquad\text{for a.e. }x\in\R^n.
\]
Since $u>0$ a.e. in $\Omega$ and $u=0$ a.e. in $\Omega^c$, while
$\{u^\ast>0\}=B_{R_V}$ up to a null set, we conclude that $|\Omega\triangle(B_{R_V}+x_0)|=0.$
Thus equality can occur only when $\Omega$ coincides, up to a null set,
with a ball. If $\Omega$ has continuous boundary, then $\Omega=B_{R_V}+x_0,$
which completes the proof.
\end{proof}


An additional consequence of the small-volume result is that it recovers the classical Faber--Krahn inequality for the fractional Laplacian.

\begin{corollary}
\label{cor:fractional-fk-from-fraclog}
Let $n\geq1$ and $s\in(0,1)$. Then, for every bounded open set
$\Omega\subset\R^n$,
\[
\lambda_{1,s}(\Omega)
\geq
\lambda_{1,s}(B),
\]
where $B$ is any ball satisfying $|B|=|\Omega|$.
\end{corollary}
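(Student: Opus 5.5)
The plan is to follow the scaling argument sketched in the introduction: apply Theorem~\ref{teo dom-1} to the dilated pair $r\Omega$, $rB$, and let $r\downarrow0$. Fix $\Omega$ and a ball $B$ with $|B|=|\Omega|$. For $r>0$ we have $|r\Omega|=|rB|=r^n|\Omega|$. Hence for $r$ small both sets lie in the regime $0<|r\Omega|\le V_0(n,s)$, and Theorem~\ref{teo dom-1} gives $\lambda_1^{s+\ln}(r\Omega)\ge\lambda_1^{s+\ln}(rB)$.

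The first step is the scaling identity. The same computation as in Lemma~\ref{lem:scaling}, with $B_1$ replaced by an arbitrary bounded open set $U$ and using $w(y)=r^{n/2}u(ry)$, gives
\[
r^{2s}\lambda_1^{s+\ln}(rU)=\inf_{\substack{w\in\mathcal H_0^{s+\ln}(U)\\ \|w\|_2=1}}\Bigl\{\mathcal E_{s+\ln}(w,w)+2\ln\tfrac1r\,\mathcal E_s(w,w)\Bigr\},
\]
where $\mathcal E_s(w,w)=\int|\xi|^{2s}|\widehat w|^2\,d\xi$. Set $L=2\ln(1/r)\to\infty$. Multiplying the inequality from Theorem~\ref{teo dom-1} by $r^{2s}/L$ reduces the corollary to the limit
\[
\lim_{L\to\infty}\frac1L\inf_{\|w\|_2=1}\bigl\{\mathcal E_{s+\ln}(w,w)+L\,\mathcal E_s(w,w)\bigr\}=\lambda_{1,s}(U)
\qquad\text{for } U=\Omega,\,B.
\]

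The main obstacle is proving this limit. I would prove the two bounds separately.

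\emph{Lower bound.} Since $|\xi|^{2s}\ln|\xi|^2\ge-1/(es)$, the functional is at least $L\,\mathcal E_s(w,w)-1/(es)$. Because $\mathcal H_0^{s+\ln}(U)\subset H_0^s(U)$, which follows by density of $C_c^\infty(U)$ in both spaces, this yields $F_U(L)\ge L\lambda_{1,s}(U)-1/(es)$. Dividing by $L$ gives $\liminf F_U(L)/L\ge\lambda_{1,s}(U)$.

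\emph{Upper bound.} Fix $\varepsilon>0$ and pick $\varphi\in C_c^\infty(U)$ with $\|\varphi\|_2=1$ and $\mathcal E_s(\varphi,\varphi)\le\lambda_{1,s}(U)+\varepsilon$. This is possible by the definition of $\lambda_{1,s}$ and density of $C_c^\infty(U)$ in $H^s_0(U)$. Since $\mathcal E_{s+\ln}(\varphi,\varphi)$ is a fixed finite number, we get $F_U(L)/L\le\lambda_{1,s}(U)+\varepsilon+O(1/L)$. Together the two bounds give the limit. The only point needing care is that $\lambda_{1,s}(U)$ is defined with the $H^s_0(U)$ closure of $C_c^\infty(U)$. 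This matches the zero-exterior Dirichlet eigenvalue in the fractional Faber--Krahn statement; for general open sets one uses the same closure convention.

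Finally, divide $r^{2s}\lambda_1^{s+\ln}(r\Omega)\ge r^{2s}\lambda_1^{s+\ln}(rB)$ by $L>0$ (valid for $r<1$) and let $r\downarrow0$. Applying the limit to $U=\Omega$ and $U=B$ yields $\lambda_{1,s}(\Omega)\ge\lambda_{1,s}(B)$.
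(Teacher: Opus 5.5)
Your proposal is correct and follows essentially the same route as the paper: apply Theorem~\ref{teo dom-1} to the dilates for small $t$, rewrite the resulting inequality via the scaling identity as $F_L(\Omega)\ge F_L(B)$ for large $L=2\ln(1/t)$, and show $\lim_{L\to\infty}F_L(D)/L=\lambda_{1,s}(D)$ using the bound $r^{2s}\ln r^2\ge -1/(es)$ for the lower estimate and a $C_c^\infty$ test function for the upper estimate. One small refinement: the inclusion $\mathcal H_0^{s+\ln}(U)\subset H_0^s(U)$ in your lower bound holds because the $\mathcal H^{s+\ln}$-norm controls the $H^s$-norm, so the two closures of $C_c^\infty(U)$ are compatible; density of $C_c^\infty(U)$ in each space separately would not be enough.
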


\begin{proof}
For $t>0$ sufficiently small, $|t\Omega|\leq V_0$, and hence
Theorem~\ref{teo dom-1} gives $$\lambda_1^{s+\ln}(t\Omega)
\geq
\lambda_1^{s+\ln}(tB).$$
For a bounded open set $D$, set
\[
F_L(D)
:=
\inf_{\substack{u\in\mathcal H_0^{s+\ln}(D)\\ \|u\|_2=1}}
\left\{
\int_{\R^n}|\xi|^{2s}\ln|\xi|^2|\widehat u(\xi)|^2\,d\xi
+
L\int_{\R^n}|\xi|^{2s}|\widehat u(\xi)|^2\,d\xi
\right\}.
\]
The $L^2$-preserving dilation gives $\lambda_1^{s+\ln}(tD)
=
t^{-2s}F_L(D),\,L=2\ln\frac1t.$
Thus
\[
F_L(\Omega)\geq F_L(B)
\qquad\text{for all sufficiently large }L.
\]
Moreover, $\displaystyle \lim_{L\to\infty}\frac{F_L(D)}{L}
=
\lambda_{1,s}(D).$
Indeed, since $r^{2s}\ln r^2\geq-\frac1{es},\,r\ge 0,$
one has
\[
\frac{F_L(D)}L
\geq
\lambda_{1,s}(D)-\frac1{esL}.
\]
Conversely, for every $\varepsilon>0$, choose
$\varphi\in C_c^\infty(D)$ with $\|\varphi\|_2=1$ and
\[
\int_{\R^n}|\xi|^{2s}|\widehat\varphi(\xi)|^2\,d\xi
\leq
\lambda_{1,s}(D)+\varepsilon.
\]
Then
\[
\frac{F_L(D)}L
\leq
\lambda_{1,s}(D)+\varepsilon
+
\frac1L
\int_{\R^n}|\xi|^{2s}\ln|\xi|^2
|\widehat\varphi(\xi)|^2\,d\xi.
\]
Letting first $L\to\infty$ and then $\varepsilon\to0$ proves the claim.
Therefore, dividing $F_L(\Omega)\geq F_L(B)$ by $L$ and passing to the
limit $L\to\infty$, we obtain $\lambda_{1,s}(\Omega)\geq\lambda_{1,s}(B).$
\end{proof}


\section{Appendix: rigidity for the Logarithmic Faber--Krahn inequality} 
The purpose of this section is to settle the equality case in the
Faber--Krahn inequality for the logarithmic Laplacian. We use the notation of \cite{ChenWeth2019}. For a bounded Lipschitz domain
$\Omega\subset\R^N$, let $\lambda_1^{\ln}(\Omega)$ denote the first
Dirichlet eigenvalue of $(-\Delta)^{\ln}$ with zero exterior condition
$u=0$ a.e.\ in $\R^N\setminus\Omega$. Its quadratic form is
\begin{equation}\label{eq:log-form}
\begin{aligned}
\cE_L(u,u)
={}&
\frac{c_N}{2}
\iint_{|x-y|<1}
\frac{(u(x)-u(y))^2}{|x-y|^N}\,dx\,dy \\
&-
c_N
\iint_{|x-y|\geq1}
\frac{u(x)u(y)}{|x-y|^N}\,dx\,dy
+\rho_N\|u\|_2^2,
\end{aligned}
\end{equation}
where $c_N>0$ and $\rho_N\in\R$ depend only on $N$.

Chen and Weth
\cite[Corollary 3.6]{ChenWeth2019} proved that, for every bounded Lipschitz domain
$\Omega\subset\R^N$ and every ball $B$ satisfying $|B|=|\Omega|$,
\[
\lambda_1^{\ln}(\Omega)\geq \lambda_1^{\ln}(B).
\]
Their proof is obtained by passing to the limit from the fractional
Faber--Krahn inequality. As pointed out in
\cite[after Corollary~3.6]{ChenWeth2019}, this argument does not determine
the equality case, and the rigidity statement
\[
\lambda_1^{\ln}(\Omega)=\lambda_1^{\ln}(B)
\quad\Longrightarrow\quad
\Omega \text{ is a ball, up to translation}
\]
was left open. We now establish the rigidity statement.

\begin{lemma}\label{lem:scaling1}
Let $\Omega\subset\R^N$ be a bounded Lipschitz domain. Then, for every
$r>0$,
\[
\lambda_1^{\ln}(r\Omega)
=
\lambda_1^{\ln}(\Omega)-2\log r.
\]
In particular, for any bounded Lipschitz domains $\Omega_1,\Omega_2$,
\[
\lambda_1^{\ln}(r\Omega_1)-\lambda_1^{\ln}(r\Omega_2)
=
\lambda_1^{\ln}(\Omega_1)-\lambda_1^{\ln}(\Omega_2).
\]
\end{lemma}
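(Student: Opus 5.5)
The plan is to argue exactly as in the proof of Lemma~\ref{lem:scaling}, but now for the logarithmic symbol $2\ln|\xi|$ rather than $|\xi|^{2s}\ln|\xi|^2$. The map we use is the $L^2$-preserving dilation
\[
u\mapsto w,\qquad w(y):=r^{N/2}u(ry).
\]
We check first that this map sends the form domain of $\cE_L$ on $r\Omega$ bijectively onto the form domain on $\Omega$. This holds because $C_c^\infty(r\Omega)$ is carried onto $C_c^\infty(\Omega)$, and the norm of the space $\mathbb H(\Omega)$ of \cite{ChenWeth2019} is comparable under dilation. That norm is the $|x-y|<1$ part of \eqref{eq:log-form}, and after dilation it becomes the analogous integral over $|x-y|<r^{-1}$. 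The two versions differ by an integral of $u(x)u(y)|x-y|^{-N}$ over a bounded annulus, which is controlled by $C(r)\|u\|_2^2$.

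To compute the energy we will not work with \eqref{eq:log-form} directly. Instead we use the Fourier representation established in \cite{ChenWeth2019}, valid for $u\in C_c^\infty$:
\[
\cE_L(u,u)=\int_{\R^N}2\ln|\xi|\,|\widehat u(\xi)|^2\,d\xi .
\]
The dilation gives $\widehat u(\xi)=r^{N/2}\widehat w(r\xi)$. Substituting $\eta=r\xi$ then yields
\[
\cE_L(u,u)=\int_{\R^N}\bigl(2\ln|\eta|-2\ln r\bigr)|\widehat w(\eta)|^2\,d\eta
=\cE_L(w,w)-2\ln r\,\|w\|_2^2 .
\]
This identity holds on $C_c^\infty(r\Omega)$ and extends to the whole form domain by density, because both sides are continuous in the form norm.

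Since $\|u\|_2=\|w\|_2$, the Rayleigh quotients on $r\Omega$ and on $\Omega$ differ by the constant $-2\ln r$. Taking the infimum over all admissible functions gives $\lambda_1^{\ln}(r\Omega)=\lambda_1^{\ln}(\Omega)-2\ln r$. The second assertion follows by subtracting this identity for $\Omega_1$ and for $\Omega_2$: the common term $-2\ln r$ cancels.

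The only point requiring care is the justification of the Fourier representation on the energy space and the density argument. If one prefers to avoid this, the identity can instead be checked directly from \eqref{eq:log-form}. Dilation moves the threshold $|x-y|<1$ to $|x-y|<r^{-1}$. For $r<1$, the difference of the two forms then consists of the terms over the annulus $1\le|x-y|<r^{-1}$. Writing
\[
(u(x)-u(y))^2=u(x)^2+u(y)^2-2u(x)u(y),
\]
the cross terms cancel against the corresponding part of the long-range interaction term. What remains is $c_N\|u\|_2^2$ times the integral of $|z|^{-N}$ over the annulus, which equals $c_N|\mathbb S^{N-1}|\ln(1/r)\,\|u\|_2^2$. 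Since $c_N|\mathbb S^{N-1}|=2$, this is $2\ln(1/r)\,\|u\|_2^2=-2\ln r\,\|u\|_2^2$, as required. The case $r>1$ is symmetric.
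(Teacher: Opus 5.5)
Your proposal is correct and follows the paper's proof. Both use the $L^2$-preserving dilation, the Fourier representation $\cE_L(u,u)=\int_{\R^N}2\ln|\xi|\,|\widehat u(\xi)|^2\,d\xi$, the change of variables $\eta=r\xi$, and a density argument; that you dilate in the inverse direction, $w(y)=r^{N/2}u(ry)$, makes no difference. Your kernel-side check over the annulus $1\le|z|<r^{-1}$, using $c_N|\mathbb S^{N-1}|=2$, is also correct and confirms the same identity without Fourier analysis.
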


\begin{proof}
For $u\in C_c^\infty(\Omega)$, define $u_r(x):=r^{-N/2}u(x/r).$
Then $u_r\in C_c^\infty(r\Omega)$, $\|u_r\|_2=\|u\|_2,\,
\widehat{u_r}(\xi)=r^{N/2}\widehat u(r\xi).$
Using the Fourier representation of the logarithmic Dirichlet form,
\[
\cE_L(u,u)
=
\int_{\R^N}2\log|\xi|\,|\widehat u(\xi)|^2\,d\xi,
\]
we obtain, after the change of variables $\eta=r\xi$,
\begin{align*}
\cE_L(u_r,u_r)
&=
\int_{\R^N}
2\log|\xi|\,r^N|\widehat u(r\xi)|^2\,d\xi=
\int_{\R^N}
2\log\frac{|\eta|}{r}\,
|\widehat u(\eta)|^2\,d\eta=
\cE_L(u,u)-2\log r\,\|u\|_2^2.
\end{align*}
By density, the dilation $u\mapsto u_r$ extends to a unitary
bijection between the Dirichlet form spaces associated with $\Omega$
and $r\Omega$, and the same identity holds throughout the form
domain. Therefore,
\begin{align*}
\lambda_1^{\ln}(r\Omega)=
\lambda_1^{\ln}(\Omega)-2\log r.
\end{align*}
The second assertion follows immediately.
\end{proof}

 Set $K(z):=|z|^{-N}\mathbf 1_{\{|z|<1\}}$ and 
\[
\mathcal D_K(u):=
\iint_{\R^N\times\R^N}
(u(x)-u(y))^2K(x-y)\,dx\,dy.
\]
For a nonnegative measurable function $u$, we denote by $u^*$ its
symmetric decreasing rearrangement.

For a measurable function $u$, we write
\[
\operatorname{ess\,supp}u
:=
\R^N\setminus
\bigcup\left\{
U\subset\R^N:\ U\text{ is open and }u=0\text{ a.e. in }U
\right\}.
\]

We next isolate the strict rearrangement argument. The proof consists of two steps. We first regularize the singular kernel
$K$ by truncation and apply the Riesz rearrangement inequality to obtain
the monotonicity of $\mathcal D_K$ under symmetric decreasing
rearrangement. For the equality case, we decompose $K$ into a sum of two
radial nonincreasing kernels, one of which is strictly decreasing on the
relevant distance range. The assumption
$\operatorname{diam}(\operatorname{ess\,supp}u)<1$ then allows us to
replace this kernel by a globally strictly decreasing integrable one,
so that the equality characterization in the Riesz rearrangement
inequality applies.

\begin{proposition}\label{prop:strict}
Let $u\geq0$ be nonzero and compactly supported, and assume that
\[
d:=\operatorname{diam}(\operatorname{ess\,supp}u)<1,
\qquad
\mathcal D_K(u)<\infty.
\]
Then $\mathcal D_K(u^*)\leq \mathcal D_K(u).$
If equality holds, then there exists $a\in\R^N$ such that
\[
u(x)=u^*(x-a)
\qquad\text{for a.e. }x\in\R^N.
\]
\end{proposition}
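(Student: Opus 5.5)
The plan mirrors Step 3 of the proof of Theorem~\ref{teo dom-1}, now with $K(z)=|z|^{-N}\mathbf 1_{\{|z|<1\}}$. For the inequality I truncate, setting $K_M(r):=\min\{r^{-N},M\}\mathbf 1_{\{r<1\}}$ for $M>0$. Each $K_M$ is radial, nonincreasing and in $L^1(\R^N)$. Since $u\geq0$, $u\in L^2$ (because $u$ is compactly supported and $\mathcal D_K(u)<\infty$; if needed one first treats bounded truncations $\min\{u,k\}$), and $\|u^*\|_2=\|u\|_2$, expanding the square gives
\[
\mathcal D_{K_M}(u)=2\|u\|_2^2\|K_M\|_{L^1}-2\iint u(x)u(y)K_M(x-y)\,dx\,dy .
\]
The Riesz rearrangement inequality \cite[Theorem~3.9]{LiebLoss2001} then yields $\mathcal D_{K_M}(u^*)\leq\mathcal D_{K_M}(u)$. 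Letting $M\to\infty$, monotone convergence gives $\mathcal D_K(u^*)\leq\mathcal D_K(u)$.

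For equality I first propagate it to every truncation level, using the monotone-differences trick from the proof of Theorem~\ref{teo dom-1}. For $M_2>M_1$ the kernel $K_{M_2}-K_{M_1}$ is again radial and nonincreasing. Applying Riesz to it gives that $M\mapsto\mathcal D_{K_M}(u)-\mathcal D_{K_M}(u^*)$ is nonnegative and nondecreasing. Its limit is $\mathcal D_K(u)-\mathcal D_K(u^*)=0$, which is finite because $\mathcal D_K(u^*)\leq\mathcal D_K(u)<\infty$. Hence it vanishes for every $M$, and
\[
\iint u(x)u(y)K_M(x-y)\,dx\,dy=\iint u^*(x)u^*(y)K_M(x-y)\,dx\,dy\qquad\text{for all }M.
\]
Next I integrate against $e^{-M}\,dM$ to obtain the same identity for $H_0(r)=1-e^{-r^{-N}}$ on $(0,1)$, extended by $0$ for $r\geq1$. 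This kernel is bounded, and it is strictly decreasing on $(0,1)$ because $r^{-N}$ is.

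The step I expect to be the main obstacle is producing a globally strictly decreasing kernel, since $H_0$ is flat (zero) on $[1,\infty)$. Here the hypothesis $d<1$ enters. The essential support of $u$ has diameter $d<1$, and $u^*$ is supported in the ball of volume $|\{u>0\}|$. By the isodiametric inequality that ball has diameter at most $d$. So in both double integrals only distances $|x-y|\leq d<1$ occur. I then define $H(r):=H_0(r)+c$ for $r\leq1$ and $H(r):=ce^{-(r-1)}$ for $r>1$, with $c>0$. On pairs at distance $\leq d$ we have $H=H_0+c$, and $\int u=\int u^*$ by equimeasurability; here $u\in L^1$ since it is $L^2$ with compact support. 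Together these give
\[
\iint u(x)u(y)H(x-y)\,dx\,dy=\iint u^*(x)u^*(y)H(x-y)\,dx\,dy .
\]
Now $H$ is integrable, nonnegative and strictly decreasing, so the strict case of the Riesz rearrangement inequality \cite[Theorem~3.9]{LiebLoss2001} applies with $f=g=u$. It gives $u=u^*(\cdot-a)$ a.e. for some $a\in\R^N$.
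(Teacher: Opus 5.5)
Your argument is correct. Its overall structure matches the paper's: truncation plus Riesz for the inequality, then, for equality, a bounded strictly decreasing kernel, the isodiametric inequality to keep all relevant distances at most $d<1$, and the strict case of \cite[Theorem~3.9]{LiebLoss2001}.

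Where you differ is in how the strictly decreasing piece is extracted. You reuse the scheme from Step~3 of the proof of Theorem~\ref{teo dom-1}. Monotonicity of $M\mapsto\mathcal D_{K_M}(u)-\mathcal D_{K_M}(u^*)$ propagates equality to every truncation level, and averaging against $e^{-M}\,dM$ produces $H_0=1-e^{-K}$.

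The paper instead splits $K=J+R$ with the explicit cone $J(z)=\varepsilon(1-|z|)_+$, $0<\varepsilon<1$. Since $r\mapsto r^{-N}$ has slope at most $-N$ on $(0,1)$, the remainder $R$ is still nonnegative and nonincreasing. Hence both $\mathcal D_J$ and $\mathcal D_R$ decrease under rearrangement, and equality for $\mathcal D_K$ forces equality for $\mathcal D_J$ directly. The paper then replaces $J$ by a strictly decreasing $\widetilde J$ that agrees with $J$ on $[0,d]$.

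What each buys:
\begin{itemize}
\item The paper's splitting is shorter for this particular steep kernel. It avoids both the monotone-difference step and the Laplace averaging.
\item Your route uses only that $K$ is strictly decreasing on $(0,1)$, with no slope condition. It therefore treats this proposition and Theorem~\ref{teo dom-1} by a single argument.
\end{itemize}
The two routes are closer than they look. The choice $J=\varepsilon(1-e^{-K})=\varepsilon H_0$ is also an admissible splitting, because $t\mapsto t-\varepsilon(1-e^{-t})$ is increasing for $\varepsilon<1$. That choice gives your identity for $H_0$ without the monotone-difference step.

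One point needs tightening: your justification that $u\in L^2$ is only a gesture. Deducing it from compact support and $\mathcal D_K(u)<\infty$ alone requires a Poincar\'e-type inequality for $\mathcal D_K$, which you do not supply. Passing to $\min\{u,k\}$ does not by itself give an $L^2$ bound uniform in $k$.

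The hypothesis $d<1$ settles this in one line, as in the paper. For $x\in\operatorname{ess\,supp}u$ one has $u(y)=0$ for a.e.\ $y$ with $d<|x-y|<1$. Hence $\mathcal D_K(u)\geq|\mathbb S^{N-1}|\log\frac1d\,\|u\|_2^2$.
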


\begin{proof}
We first note that $u\in L^2(\R^N)$. Indeed, if
$E:=\operatorname{ess\,supp}u$, then for every $x\in E$ and almost every
$y$ satisfying $d<|x-y|<1,$
we have $u(y)=0$. Hence
\[
\begin{aligned}
\mathcal D_K(u)
&\geq
\int_E u(x)^2
\int_{\{d<|x-y|<1\}}
\frac{dy}{|x-y|^N}\,dx=
|\mathbb S^{N-1}|
\log\frac1d\,
\|u\|_2^2.
\end{aligned}
\]

For $m>0$, define $K_m:=\min\{K,m\}.$
Then $K_m\in L^1(\R^N)$ is nonnegative, radial, and nonincreasing. Since
\[
\mathcal D_{K_m}(u)
=
2\|K_m\|_1\|u\|_2^2
-
2\iint_{\R^N\times\R^N}
u(x)u(y)K_m(x-y)\,dx\,dy,
\]
the Riesz rearrangement inequality gives $\mathcal D_{K_m}(u^*)
\leq
\mathcal D_{K_m}(u).$
Since $K_m\uparrow K$ almost everywhere, monotone convergence yields
\begin{equation}\label{eq:PSK}
\mathcal D_K(u^*)
\leq
\mathcal D_K(u).
\end{equation}

Assume now that equality holds in \eqref{eq:PSK}. Fix
$0<\varepsilon<1$ and define
\[
J(z):=\varepsilon(1-|z|)_+,
\qquad
R(z):=K(z)-J(z).
\]
For $0<r<1$, $R(r)=r^{-N}-\varepsilon(1-r),$
and
\[
R'(r)
=
-Nr^{-N-1}+\varepsilon
\leq
-N+\varepsilon<0.
\]
Moreover, $R(1^-)=1,\,
R(r)=0$ for $r\geq1.$
Thus $R$ is nonnegative, radial, and nonincreasing. Applying the preceding
truncation argument to $R$, and the usual Riesz rearrangement inequality
to $J$, we obtain
\[
\mathcal D_R(u^*)\leq\mathcal D_R(u),
\qquad
\mathcal D_J(u^*)\leq\mathcal D_J(u).
\]
Since $\mathcal D_K=\mathcal D_J+\mathcal D_R,$
equality in \eqref{eq:PSK} implies $\mathcal D_J(u^*)
=
\mathcal D_J(u).$
Because $J\in L^1(\R^N)$ and $\|u^*\|_2=\|u\|_2,$ thus
\begin{equation}\label{eq:IJ-eq}
\iint_{\R^N\times\R^N}
u(x)u(y)J(x-y)\,dx\,dy
=
\iint_{\R^N\times\R^N}
u^*(x)u^*(y)J(x-y)\,dx\,dy.
\end{equation}

Let $E:=\{u>0\}.$
Since $\operatorname{diam}(E)\leq d$, the isodiametric inequality gives $|E|
\leq
\left|B_{d/2}\right|,$ see \cite[Section~6]{Fusco2015}.
Because $\{u^*>0\}$ is a centered ball with measure $|E|$, it follows that $\operatorname{diam}(\operatorname{supp}u^*)
\leq d.$

\smallskip

We now replace $J$ by a strictly radial decreasing integrable kernel.
Define $\widetilde J(z):=\widetilde j(|z|),$
where
\[
\widetilde j(r):=
\begin{cases}
\varepsilon(1-r),
&0\leq r\leq d,\\[1mm]
\varepsilon(1-d)e^{-(r-d)},
&r>d.
\end{cases}
\]
Then $\widetilde J$ is positive, radial, strictly decreasing, and belongs
to $L^1(\R^N)$. Moreover, whenever
\[
u(x)u(y)\neq0
\quad\text{or}\quad
u^*(x)u^*(y)\neq0,
\]
we have $|x-y|\leq d$. Hence $J(x-y)=\widetilde J(x-y)$
on all pairs contributing to the two integrals in
\eqref{eq:IJ-eq}. Therefore,
\[
\iint
u(x)u(y)\widetilde J(x-y)\,dx\,dy
=
\iint
u^*(x)u^*(y)\widetilde J(x-y)\,dx\,dy.
\]
Since $\widetilde J$ is strictly radially decreasing, the strict equality
case in the Riesz rearrangement inequality
\cite[Theorem 3.9]{LiebLoss2001} implies that there exists $a\in\R^N$ such that
\[
u(x)=u^*(x-a)
\qquad\text{for a.e. }x\in\R^N,
\]
which completes the proof.
\end{proof}

\begin{proof}[\textbf{Proof of Theorem~\ref{thm:rigidity}.}]
By Lemma~\ref{lem:scaling1}, it is enough to prove the assertion under the
additional assumption ${\rm diam}(\Omega)<1.$

Let $u_1\ge0$ be the $L^2$-normalized first Dirichlet eigenfunction of
$(-\Delta)^{\ln}$ on $\Omega$. By \cite[Theorem~1.4(iii)]{ChenWeth2019}, the first Dirichlet
eigenfunction is strictly positive in $\Omega$. Hence
\begin{equation}\label{eq:positive-set}
u_1>0 \quad\text{a.e. in }\Omega,
\qquad
|\{u_1>0\}|=|\Omega|.
\end{equation}
Since $u_1=0$ a.e. on $\Omega^c$ and ${\rm diam}\Omega<1$, the second term in
\eqref{eq:log-form} vanishes. Therefore
\begin{equation}\label{eq:small-form}
\cE_L(u_1,u_1)
=
\frac{c_N}{2}\mathcal{D}_K(u_1)+\rho_N\|u_1\|_2^2.
\end{equation}
By the isodiametric inequality, $\operatorname{diam}(\operatorname{ess\,supp}u_1^*)
\leq
\operatorname{diam}(\Omega)<1.$
Hence \eqref{eq:small-form} also applies to $u_1^*$, and
Proposition~\ref{prop:strict} yields
\[
\cE_L(u_1^*,u_1^*)
\leq
\cE_L(u_1,u_1)
=
\lambda_1^{\ln}(\Omega).
\]
Since $u_1^*$ is admissible for the first Dirichlet eigenvalue of the ball $B$
and $\|u_1^*\|_2=1$,
\[
\lambda_1^{\ln}(B)
\le
\cE_L(u_1^*,u_1^*)
\le
\lambda_1^{\ln}(\Omega).
\]

Assume now that $\lambda_1^{\ln}(\Omega)=\lambda_1^{\ln}(B).$
After the same scaling reduction, the preceding chain consists entirely of
equalities. In particular, $\mathcal{D}_K(u_1)=\mathcal{D}_K(u_1^*).$
Proposition~\ref{prop:strict} yields an $a\in\R^N$ such that
\[
 u_1(x)=u_1^*(x-a)
 \qquad\text{for a.e. }x.
\]
Using \eqref{eq:positive-set}, we conclude that
\[
\big|\big(\Omega\setminus (a+B)\big)\cup\big((a+B)\setminus \Omega \big) |=0.
\]
Because both $\Omega$ and $a+B$ are Lipschitz domains, equality up to a null
set implies equality as open sets. Hence $\Omega=a+B.$
Conversely, translation invariance of $(-\Delta)^{\ln}$ gives equality whenever
$\Omega$ is a translate of $B$. This completes the proof.
\end{proof}


\bigskip

{\small
\noindent\textbf{Conflict of interest.}
The authors declare that they have no conflict of interest.

\medskip
\noindent\textbf{Data availability.}
No data were used for the research described in this article.

\medskip
\noindent\textbf{Acknowledgements.}
H. Chen is supported by  NSFC, no. 12361043. \\
R. Chen is supported by China Scholarship Council,  Liujinxuan [2025] no. 37. \\
 B. Hua is supported by NSFC, no.12371056. 

\medskip
\noindent\textbf{AI assistance statement.}
The authors used OpenAI models as assistive tools in preparing this manuscript. All mathematical arguments,
proofs, and verifications were carried out by the authors, who take full
responsibility for the content of the paper.
}

\bibliographystyle{amsplain}
\bibliography{references}

\end{document}